\definecolor{violet}{rgb}{0.0,0.2,0.7}
\definecolor{rouge2}{rgb}{0.8,0.0,0.2}
\theoremstyle{definition}
\newtheorem{defn}{Definition}[section]
\newtheorem*{ques}{Question}
\newtheorem*{ackn}{Acknowledgement}
\theoremstyle{remark}
\newtheorem{rmk}{Remark}[section]
\newtheorem*{rmk*}{Remark}
\theoremstyle{plain}
\newtheorem{thm}{Theorem}[section]
\newtheorem{lem}[thm]{Lemma}
\newtheorem{prop}[thm]{Proposition}
\newtheorem{cor}[thm]{Corollary}
\theoremstyle{plain}
\newtheorem{bigthm}{Theorem}
\newtheorem{bigset}[bigthm]{Setup}
\renewcommand{\thebigthm}{\Alph{bigthm}} % "letter-numbered" theorems
\newtheorem*{bigrmk*}{Remark}
\newcommand{\settheoremtag}[1]{% \settheoremtag{<tag>}
	\let\oldthebigthm\thebigthm% Store \thetheorem
	\renewcommand{\thebigthm}{#1}% Redefine it to a fixed value
	\g@addto@macro\endbigset{% At \end{theorem}, ...
		\addtocounter{bigthm}{-1}% ...restore theorem counter value and...
		\global\let\thebigthm\oldthebigthm}% ...restore \thetheorem
}
\numberwithin{equation}{section}
\newlist{steps}{enumerate}{1}
\setlist[steps, 1]{label = Step \arabic*:}
\DeclareFontFamily{U}{MnSymbolC}{}
\DeclareSymbolFont{MnSyC}{U}{MnSymbolC}{m}{n}
\DeclareFontShape{U}{MnSymbolC}{m}{n}{
	<-6>  MnSymbolC5
	<6-7>  MnSymbolC6
	<7-8>  MnSymbolC7
	<8-9>  MnSymbolC8
	<9-10> MnSymbolC9
	<10-12> MnSymbolC10
	<12->   MnSymbolC12}{}
\DeclareMathSymbol{\intprod}{\mathbin}{MnSyC}{'270}
\DeclareMathOperator{\tr}{tr}
\DeclareMathOperator{\pr}{pr}
\DeclareMathOperator{\Vol}{Vol}
\DeclareMathOperator{\Ric}{Ric}
\DeclareMathOperator{\supp}{supp}
\def\1{\mathds{1}}
\newcommand\oh{\frac{1}{2}}
\newcommand{\ii}{\mathrm{i}}
\newcommand{\loc}{\mathrm{loc}}
\newcommand\dt{\delta}
\newcommand\vep{\varepsilon}
\newcommand\om{\omega}
\newcommand\af{\alpha}
\newcommand\bt{\beta}
\newcommand\Dt{\Delta}
\newcommand\Gm{\Gamma}
\newcommand\RG{\mathrm{G}}
\newcommand\BN{\mathbb{N}}
\newcommand\BZ{\mathbb{Z}}
\newcommand\BR{\mathbb{R}}
\newcommand\BC{\mathbb{C}}
\newcommand\BS{\mathbb{S}}
\newcommand\BD{\mathbb{D}}
\newcommand\BCP{\mathbb{CP}}
\newcommand\CC{\mathcal{C}}
\def\CD{\mathcal{D}}
\newcommand\CM{\mathcal{M}}
\newcommand\CO{\mathcal{O}}
\newcommand\CU{\mathcal{U}}
\newcommand\CX{\mathcal{X}}
\newcommand\lt{\left}
\newcommand\rt{\right}
\newcommand\ra{\rightarrow}
\newcommand\pl{\partial}
\newcommand\db{\bar{\partial}}
\newcommand\ddb{\partial \bar{\partial}}
\newcommand\dd{\mathrm{d}}
\newcommand\dc{\mathrm{d}^{\mathrm{c}}}
\newcommand\ddc{\mathrm{dd}^{\mathrm{c}}}
\newcommand\norm[1]{\left\lVert {#1} \right\rVert}
\newcommand\abs[1]{\left\lvert {#1} \right\rvert}
\newcommand\w{\wedge}
\newcommand\reg{\mathrm{reg}}
\newcommand\sing{\mathrm{sing}}
\newcommand\set[2]{\left\{ {#1} \, \middle| \, {#2} \right\}}
\newcommand\iprod[2]{\left\langle {#1}, {#2} \right\rangle}
\newcommand\res[2]{\left. {#1} \right|_{#2}} %Restriction symbol. I don't know if this command is too short.
\newcommand{\RN}[1]{\textup{\uppercase\expandafter{\romannumeral#1}}}
\title{Singular Gauduchon metrics}
\date{\today}
\author{Chung-Ming Pan}
\address{
Institut de Math\'{e}matiques de Toulouse; UMR 5219, Universit\'{e} de Toulouse; CNRS, UPS, 118 route de Narbonne, F-31062 Toulouse Cedex 9, France	
}
\email{Chung\_Ming.Pan@math.univ-toulouse.fr}
\begin{document} 

\subjclass{53C55 (primary), 14D06, 32C15, 53C15 (secondary)}

\keywords{Gauduchon metrics, Complex spaces, Smoothable singularities}

\maketitle

\begin{abstract}
In 1977, Gauduchon proved that on every compact hermitian manifold $(X, \om)$ there exists a conformally equivalent hermitian metric $\om_\RG$ which satisfies $\ddc \om_\RG^{n-1} = 0$. 
In this note, we extend this result to irreducible compact singular hermitian varieties which admit a smoothing.
\end{abstract}

\tableofcontents

\section*{Introduction}
Let $X$ be an $n$-dimensional compact complex manifold equipped with a positive definite smooth $(1,1)$-form $\om$.
We also call $\om$ a hermitian metric because such $\om$ corresponds to a hermitian metric.
A famous theorem of Gauduchon \cite{Gauduchon1977} says that there exists a metric $\om_\RG$ in the conformal class of $\om$ such that $\ddc \om_\RG^{n-1} = 0$ and the metric $\om_\RG$ is unique up to a positive multiple.  
These kind of metrics are since then called \textit{Gauduchon metrics}. 
The conformal factor $\rho$ satisfying $\om_\RG^{n-1} = \rho \om^{n-1}$ is called \textit{Gauduchon factor}.

\smallskip

% Why Gauduchon is important?
In complex geometry, finding canonical metrics on complex manifolds is a central problem. 
Two celebrated examples are Yau's solution of the Calabi conjecture \cite{Yau1978} and Uhlenbeck--Yau's characterization of the the existence of hermitian-Einstein metrics on stable vector bundles \cite{UY1986}.
These theorems are established on K\"ahler manifolds.
When the manifold is non-K\"ahler, the analysis is more difficult because hermitian metrics are no longer closed.
In such cases, Gauduchon metrics provide a useful substitute. 
For instance, Li--Yau~\cite{LiYau1987} used Gauduchon metrics to define the slope stability of vector bundles on compact non-K\"{a}hler manifolds. As a consequence, they generalized Uhlenbeck--Yau's theorem to non-K\"ahler setting. 
For generalized Calabi-Yau type problem in non-K\"ahler context, Tosatti--Weinkove~\cite{TW2010} showed that for arbitrary representative $\Psi \in c_1^{\mathrm{BC}}(X)$ of the first Bott--Chern class of $X$, there exists a hermitian metric $\om$ such that $\Ric(\om) = \Psi$ by solving complex Monge--Amp\`ere equations. 
In their proof, Gauduchon metrics play an important role to simplify calculations.
Furthermore, Sz\'{e}kelyhidi--Tosatti--Weinkove~\cite{STW2017} proved that one can even find a Gauduchon metric with prescribed Chern--Ricci curvature. 
On the other hand, Angella--Calamai--Spotti~\cite{ACS2017} studied the Chern--Yamabe problem (i.e. finding constant Chern scalar curvature metrics in the conformal class of a given metric $\om$). 
They used Gauduchon metrics to define a conformal invariant called the Gauduchon degree and showed that if a metric $\om$ has non-positive Gauduchon degree then the Chern--Yamabe problem admits a solution.
For more applications and results about Gauduchon metrics, the interested reader is referred to \cite{FinoUgarte2013, FWW2013, Li2021Gauduchon} and the references therein.

\smallskip

% Why singular complex spaces?
From an algebraic point of view, singularities are ubiquitous as they occur in various contexts, notably in the Minimal Model Program and moduli theory. 
Ueno~\cite{Ueno1975} found a birational class of $3$-dimensional complex manifolds which does not admit a smooth minimal model.
In moduli theory, it is necessary to deal with singular varieties when compactifying moduli spaces of smooth manifolds.
Already in dimension one, the fundamental domain of moduli space of elliptic curves is non-compact and nodal curves lie on its boundary.
%Hence, investigating singular spaces is a important issue in recent progress.
On the other hand, in non-K\"ahler geometry, investigating singular varieties admitting a non-K\"ahler smoothing is an essential issue due to close interactions of string theory and mathematics established over the past 40-years. 
In 1980s, a large class of non-K\"ahler Calabi--Yau threefolds was built via conifold transitions introduced by Clemens~\cite{Clemens1983} and Friedman~\cite{Friedman1986}. 
Roughly speaking, the process goes as follows: contracting a collection of disjoint $(-1,-1)$-curves from a K\"ahler Calabi--Yau threefold $X$ to get a singular Calabi--Yau variety $X_0$ and then smoothing singularities of $X_0$, one obtains a family of Calabi--Yau threefolds $(X_t)_{t \neq 0}$ which are generally non-K\"ahler.
%Usually, it is denoted by the diagram $X \dashrightarrow X_0 \rightsquigarrow X_t$.
%Starting from some K\"ahler Calabi--Yau 3-folds $X$ containing disjoint rational curves $\{C_j\}_{j=1,\cdots,k}$ with normal bundles isomorphic to $\CO_{\BCP^1}(-1)^{\oplus 2}$, one can contract these curves to get a singular Calabi-Yau variety $X_0$ and then find a smoothing $X_t$ which are Calabi--Yau and generally non-K\"ahler.
%Reid's fantasy \cite{Reid1987} conjectures that all Calabi--Yau threefolds can be connected by a sequence of conifold transitions.
Thus, it is important to understand the geometric structure on $X_t$ induced by the original Calabi--Yau manifold $X$.
Experts believe that the Hull--Strominger system \cite{Hull1986,Strominger1986} provides a natural candidate.
%Fu--Li--Yau~\cite{FLY2012} proved that the Calabi--Yau manifolds $(X_t)_{t \neq 0}$ obtained by Clemens--Friedman construction are balanced.
These models attracted a lot of attentions in recent years (cf. \cite{Reid1987,Friedman1991,Tian1992,Rossi2006,FY2008,Chuan2012,FLY2012,PPZ2018,CPY2021} and the references therein).

\smallskip

% ques about Gauduchon metrics on complex spaces:
Given these considerations, it is legitimate to look for canonical metrics or special metrics on singular hermitian varieties.
In this note, we shall focus on Gauduchon metrics.
A standard way to give a metric structure on a singular complex space is to restrict an ambient metric in local embeddings (see Definition~\ref{defn:metrics} for the precise definition). 
Then we address the following question:
\begin{ques}
Suppose that $X$ is an irreducible, reduced, compact complex space equipped with a hermitian metric $\om$.
Can one find a Gauduchon metric $\om_\RG$ in the conformal class of $\om$?
\end{ques}

The purpose of this note is to give partial answers in the setup of smoothable singularities.
This means that the variety can be approximated by a family of smooth manifolds and the hermitian metric is the restriction of an ambient smooth metric. 
Here are the precise statements:
\settheoremtag{(S)}
\begin{bigset}\label{set:family setting}
Let $\CX$ be an $(n+1)$-dimensional, irreducible, reduced, complex analytic space.
Suppose that 
\begin{itemize}
    \item $\pi: \CX \ra \BD$ is a proper, surjective, holomorphic map with connected fibres;
    \item $\pi$ is smooth over the punctured disk $\BD^\ast$;
    \item the central fibre $X_0$ is an $n$-dimensional, irreducible, reduced, compact complex analytic space.
\end{itemize}
Let $\om$ be a hermitian metric on $\CX$ in the sense of Definition~\ref{defn:metrics}.
For each $t \in \BD$, we define the hermitian metric $\om_t$ on fibre $X_t$ by restriction (i.e. $\om_t := \res{\om}{X_t}$).
\end{bigset}

In the above Setup~\ref{set:family setting}, on each smooth fibre $X_t$, there exists a Gauduchon factor $\rho_t$ with respect to $\om_t$ by Gauduchon's theorem.
We may normalize these Gauduchon factors such that $\inf_{X_t} \rho_t =1$.
Then we show the existence of a smooth Gauduchon factor on the smooth part of the central fibre.
\begin{bigthm}[cf. Corollary~\ref{cor:G bdd family} and Theorem~\ref{thm:Gau current}]\label{bigthm:Gauduchon exist family}
In Setup~\ref{set:family setting}, we have the following properties:
\begin{enumerate}
    \item There is a uniform constant $C_G \geq 1$ such that the normalized Gauduchon factors $\rho_t$ are bounded between $1$ and $C_G$ on each smooth fibre $X_t$ for all $t \in \BD^\ast_{1/2}$;
    \item There exists a smooth bounded Gauduchon factor $\rho$ of $\om_0$ on $X_0^\reg$. 
\end{enumerate}
\end{bigthm}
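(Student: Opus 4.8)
The plan is to convert the assertion into a parametrised study of linear elliptic equations, prove uniform bounds, and then extract a limit on $X_0^\reg$.

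\emph{Setting up the equation.} First I would isolate the analytic content of Gauduchon's theorem on a fixed smooth fibre $X_t$. Since $\ddc(f\,\om_t^{n-1})$ is an $(n,n)$-form it can be written as $(P_tf)\,\om_t^n$ for a unique smooth function $P_tf$; this defines a second order elliptic operator $P_t = \Delta_{\om_t} + (\text{first order}) + (\text{zeroth order})$ whose lower order coefficients are polynomial expressions in $\om_t$, $\om_t^{-1}$ and the torsion $\dd\om_t$, and which all vanish when $\om_t$ is K\"ahler. Stokes' theorem gives $\int_{X_t}(P_tf)\,\om_t^n = 0$ for every $f$, so $1$ spans $\ker P_t^\ast$ and $P_t^\ast$ has no zeroth order term; hence the principal eigenvalue of $P_t$ vanishes and $\ker P_t = \BR\,\rho_t$ with $\rho_t>0$, normalised so that $\inf_{X_t}\rho_t=1$. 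Because these coefficients are built from $\om_t=\om|_{X_t}$, and because near $X_0^\reg$ the map $\pi$ restricts to a submersion (flatness of $\pi$ together with regularity of $X_0$ along $X_0^\reg$), one has $P_t\to P_0$ in $C^\infty(K)$ for every compact $K\Subset X_0^\reg$.

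\emph{The uniform bound (1).} I would prove it in the form of a uniform global Harnack inequality $\sup_{X_t}\rho_t\le C_G\inf_{X_t}\rho_t$ for $t\in\BD^\ast_{1/2}$; since $\rho_t>0$ solves $P_t\rho_t=0$ this is De Giorgi--Nash--Moser theory, the only real issue being that $X_t$ can develop pinching necks near $\Sigma\coloneqq X_0\setminus X_0^\reg$ as $t\to0$, so that both the metric and $P_t$ degenerate there. The argument splits into three parts. (i) On a fixed compact part of $X_0^\reg$ the operators $P_t$ are uniformly elliptic with uniformly bounded coefficients, so local Harnack holds with $t$-independent constants. (ii) Near $\Sigma$, work in the finitely many local embeddings $\CX\hookrightarrow\Om\times\BD\subset\BC^N\times\BD$ furnished by Definition~\ref{defn:metrics}: there $\om_t$ is uniformly comparable to the Euclidean metric restricted to the complex submanifold $X_t=\CX\cap(\Om\times\{t\})$, which is minimal in $\BC^N$ and has uniformly bounded volume ($\pi$ proper); after rescaling a pinching neck to unit size it becomes a large but bounded piece of a fixed complete model with controlled geometry (an asymptotically conical manifold in the simplest degenerations, e.g.\ an ordinary double point), on which the Michael--Simon Sobolev inequality and volume doubling hold uniformly, giving a uniform, scale-invariant Harnack inequality across the neck; one also checks that the non-K\"ahler lower order terms of $P_t$, which measure the failure of $\om_t$ to be K\"ahler, stay bounded in a suitable $L^q$ ($q>n$) in the rescaled picture, so Moser iteration is not spoiled. (iii) Finally $X_t$ is covered by a number of such regions bounded independently of $t$, so chaining finitely many Harnack inequalities yields the uniform constant $C_G$.

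\emph{Passing to the limit (2).} Granting (1), $1\le\rho_t\le C_G$ everywhere. Fix an exhaustion $K_1\Subset K_2\Subset\cdots$ of $X_0^\reg$; on each $K_j$ one has $P_t\to P_0$ in $C^\infty$, a uniform $L^\infty$ bound on $\rho_t$, and $P_t\rho_t=0$, so interior elliptic (Schauder / $L^p$) estimates give uniform $C^k$ bounds on $K_{j-1}$ for every $k$. A diagonal Arzel\`a--Ascoli argument produces a sequence $t_\ell\to0$ with $\rho_{t_\ell}\to\rho$ in $C^\infty_{\loc}(X_0^\reg)$; then $\rho$ is smooth, $1\le\rho\le C_G$, and passing to the limit in the equation yields $\ddc(\rho\,\om_0^{n-1})=0$ on $X_0^\reg$, i.e.\ $\rho$ is a smooth bounded Gauduchon factor of $\om_0$ on $X_0^\reg$. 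The heart of the matter, and the step I expect to be most delicate, is (ii): identifying the rescaled models of the degenerating necks and proving a \emph{uniform} Harnack inequality on them; everything else is Gauduchon's theorem applied fibrewise together with standard elliptic theory with parameters.
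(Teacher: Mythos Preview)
Your treatment of part~(2) is essentially the paper's: once (1) is known, interior elliptic estimates on a relatively compact exhaustion of $X_0^\reg$ together with a diagonal argument produce the smooth limit $\rho$ with $1\le\rho\le C_G$ and $\ddc(\rho\,\om_0^{n-1})=0$. The paper writes this out via G\r{a}rding's inequality and a bootstrap on $P_t=\Dt_{\om_t}^\ast$, but the content is the same.

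For part~(1), however, your strategy differs from the paper's and contains a genuine gap. The paper never localises or chains Harnack inequalities across the degenerating neck; instead it runs a \emph{global} Moser iteration on each $X_t$, feeding in only constants that are uniform in $t$: the dimension $n$, the torsion bound $B$ of~(\ref{eq:family constant B}), the volume bound of~(\ref{eq:family constant C_V}), a uniform Sobolev constant $C_S$ (Michael--Simon, via the minimality of $X_t\hookrightarrow\BC^N$), and---this is the crucial global input---a uniform Poincar\'e constant $C_P$ (Yoshikawa, Ruan--Zhang), which is exactly where the irreducibility of $X_0$ enters. The analytic engine is an integration-by-parts identity (Lemma~\ref{lem:Gau estimate}): for $\rho\in\ker\Dt_\om^\ast$ and any $F,G$ with $xF'(x)=G'(x)$ one has $\int_X F'(\rho)\,\dd\rho\w\dc\rho\w\om^{n-1}=\int_X G(\rho)\,\ddc\om^{n-1}$. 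Two choices of $(F,G)$ yield gradient bounds that drive two successive Moser iterations, first on $\rho^{-1}$ (giving a lower bound on $\int\rho^{-1}$, hence a volume lower bound for a sublevel set of $\rho$), then on $\log\rho$ (giving $\sup\log\rho\le C\max\{\int\log\rho,1\}$); the Poincar\'e inequality closes the loop by bounding $\int\log\rho$.

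Your step~(ii) is indeed the weak point. In the generality of Setup~\ref{set:family setting} the singularities of $X_0$ are arbitrary reduced irreducible smoothable ones, so there is no reason the rescaled neck sits inside a ``fixed complete model''; and even in the ODP case, rescaling by $|t|^{1/2}$ sends the full neck to a region of the affine quadric of size $\sim |t|^{-1/2}$, not a bounded piece. Consequently a Harnack inequality \emph{across} the neck would require a chain whose length is unbounded as $t\to0$, and the resulting constant blows up. The paper's approach sidesteps this entirely: the uniform Poincar\'e constant is precisely the analytic substitute for your chaining step, encoding the global connectedness of $X_t$ in a $t$-independent way (and it is exactly what fails when $X_0$ is reducible, cf.\ the remark after Proposition~\ref{prop:SPcst}).
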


Thus, $X_0$ admits a \textit{bounded Gauduchon metric} (see Definition~\ref{defn:Gauduchon metrics}) in the conformal class of $\om_0$.
The idea of proof is to approximate the Gauduchon factor on the singular fibre by the normalized Gauduchon factors on nearby smooth fibres.

\smallskip

Next, we assume that $X$ is a variety endowed with a bounded Gauduchon metric $\om_\RG$.
We show that the trivial extension of the $(n-1,n-1)$-form $\om_\RG^{n-1}$ through $X^\sing$ is a pluriclosed current on $X$.
Moreover, we also prove the analogous uniqueness result of Gauduchon.

\begin{bigthm}\label{bigthm:Gauduchon extention}
Suppose that $X$ is an $n$-dimensional, irreducible, reduced, compact complex space endowed with a bounded Gauduchon metric $\om_\RG$.
Then we have the following uniqueness and extension properties:
\begin{enumerate}
    \item If $\om_\RG'$ is another bounded Gauduchon metric in the conformal class of $\om_\RG$, then $\om_\RG'$ must be a positive multiple of $\om_\RG$;
    \item Let $T$ be the positive $(n-1,n-1)$-current obtained as the trivial extension of $\om_\RG^{n-1}$.
    Then $T$ is a pluriclosed current.
\end{enumerate}
\end{bigthm}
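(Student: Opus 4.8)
\emph{Proof proposal.} The plan is to work on the regular locus $X^\reg$, where $\om_\RG$ is an honest smooth hermitian metric, and to dominate everything near $X^\sing$ by a cut‑off argument using only that $X^\sing$ is an analytic subset of complex codimension $\ge1$. Fix a hermitian metric $\om$ on $X$ as in Definition~\ref{defn:metrics}; by Definition~\ref{defn:Gauduchon metrics} we may write $\om_\RG^{n-1}=\rho\,\om^{n-1}$ on $X^\reg$ with $\rho$ a bounded Gauduchon factor, $0<c_0\le\rho\le C_0$ and $\ddc(\rho\,\om^{n-1})=0$ on $X^\reg$; elliptic regularity for this linear equation gives $\rho\in C^\infty(X^\reg)$, so $\om_\RG$ is smooth on $X^\reg$ and uniformly comparable to $\om$ there. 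Since $\om$ is locally the restriction of a smooth ambient form, the forms $\om^{n-1},\dd\om^{n-1},\ddc\om^{n-1}$ are bounded on $X^\reg$ and $T$ (the trivial extension of $\om_\RG^{n-1}$) has finite mass, so $T$ is a well‑defined positive $(n-1,n-1)$‑current on $X$. The essential tool is the existence of smooth cut‑offs $\chi_\ep\colon X\to[0,1]$, $\chi_\ep\equiv0$ near $X^\sing$, $\chi_\ep\uparrow1$ on $X^\reg$, with
\[
\int_{X^\reg}\bigl(|\dd\chi_\ep|_{\om}^{2}+|\ddc\chi_\ep|_{\om}\bigr)\,\om^{n}\ \longrightarrow\ 0\qquad(\ep\to0);
\]
these are logarithmic cut‑offs built from local holomorphic defining functions $\sigma$ of $X^\sing$ — the point being that $\log|\sigma|$ is pluriharmonic off $\{\sigma=0\}$, so $\ddc\chi_\ep$ carries no first‑derivative term — complemented by ordinary cut‑offs along the higher‑codimension strata; equivalently one pulls back to a log resolution $\mu\colon\widetilde X\to X$ and uses that $\mu^{-1}(X^\sing)$ is a simple normal crossings divisor.

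I would treat (2) first. For $f\in C^\infty(X)$ one must show $\langle\ddc T,f\rangle=\int_{X^\reg}\rho\,\ddc f\wedge\om^{n-1}=0$. As $\chi_\ep f\in C^\infty_c(X^\reg)$, the Gauduchon equation on $X^\reg$ gives $\int_{X^\reg}\rho\,\ddc(\chi_\ep f)\wedge\om^{n-1}=0$; expanding $\ddc(\chi_\ep f)$ by the Leibniz rule,
\[
0=\int_{X^\reg}\rho\,\chi_\ep\,\ddc f\wedge\om^{n-1}
+\int_{X^\reg}\rho\,f\,\ddc\chi_\ep\wedge\om^{n-1}
+\int_{X^\reg}\rho\,\bigl(\dd\chi_\ep\wedge\dc f+\dd f\wedge\dc\chi_\ep\bigr)\wedge\om^{n-1}.
\]
The first integral tends to $\int_{X^\reg}\rho\,\ddc f\wedge\om^{n-1}$ by dominated convergence; the third is $\le C\|\dd f\|_{\infty}\int_{X^\reg}|\dd\chi_\ep|_{\om}\om^{n}\le C'\bigl(\int_{X^\reg}|\dd\chi_\ep|_{\om}^{2}\om^{n}\bigr)^{1/2}\to0$; and the middle one is $\le\|\rho f\|_{\infty}\int_{X^\reg}|\ddc\chi_\ep|_{\om}\om^{n}\to0$, which is exactly where the logarithmic cut‑offs (pluriharmonicity of $\log|\sigma|$) enter. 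Hence $\langle\ddc T,f\rangle=0$, i.e. $T$ is pluriclosed.

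For (1), write $\om_\RG'=\rho'^{\,1/(n-1)}\om$ in the same way (being in the conformal class of $\om_\RG$, hence of $\om$, $\om_\RG'$ is indeed conformal to $\om$), and set $u:=(\om_\RG')^{n-1}/\om_\RG^{n-1}=\rho'/\rho$, a function on $X^\reg$ with $0<c\le u\le C$, $\ddc(u\,\om_\RG^{n-1})=0$ and $\ddc\om_\RG^{n-1}=0$. First a Caccioppoli estimate: testing $\ddc(\rho\,\om^{n-1})=0$ against $\rho\,\chi_\ep^{2}$, integrating by parts, and absorbing the resulting first‑order terms — which are controlled because $\dd\om^{n-1},\ddc\om^{n-1}$ are bounded — by means of the cut‑off bounds, one obtains $\int_{X^\reg}|\dd\rho|_{\om}^{2}\,\om^{n}<\infty$; the same for $\rho'$, so $u\in W^{1,2}(X^\reg)$ and $\dd(\om_\RG^{n-1})\in L^{2}(X^\reg)$. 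Then, multiplying the identity $\ddc u\wedge\om_\RG^{n-1}=\dc u\wedge\dd\om_\RG^{n-1}-\dd u\wedge\dc\om_\RG^{n-1}$ (which follows from the two equations) by $u\,\chi_\ep^{2}$ and integrating by parts, the energy $\int_{X^\reg}\chi_\ep^{2}\,\dd u\wedge\dc u\wedge\om_\RG^{n-1}$ appears, while every term containing $\dd\om_\RG^{n-1}$ is rewritten, using $\ddc\om_\RG^{n-1}=0$, so that only $\dd\chi_\ep$ survives against the $L^{2}$ form $\dd\om_\RG^{n-1}$, and thus tends to $0$ by Cauchy–Schwarz and the cut‑off bounds. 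This forces $\int_{X^\reg}\dd u\wedge\dc u\wedge\om_\RG^{n-1}=0$; since the integrand is a nonnegative multiple of $\om_\RG^{n}$ vanishing exactly where $\dd u=0$, we get $\dd u\equiv0$, and as $X$ is irreducible $X^\reg$ is connected, so $u$ is a constant and $\om_\RG'$ is a positive multiple of $\om_\RG$ — the integrated form of Gauduchon's maximum‑principle argument \cite{Gauduchon1977}.

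I expect the main obstacle to be the Caccioppoli/finite‑energy step in (1): the Gauduchon operator $v\mapsto\ddc(v\,\om^{n-1})$ is not self‑adjoint, so the integration by parts produces genuine first‑order terms, and one must organize the cut‑offs so that these — together with the terms involving $\dd\om_\RG^{n-1}$, which we do not control pointwise near $X^\sing$ — are absorbed; this works, but only with the full strength of $\int_{X^\reg}(|\dd\chi_\ep|_{\om}^{2}+|\ddc\chi_\ep|_{\om})\om^{n}\to0$, which ultimately rests on the fact that the analytic set $X^\sing$ has zero capacity (cleanest to verify on a log resolution, where the exceptional divisor is simple normal crossings). A secondary, purely technical point is to justify the various integrations by parts on the non‑compact manifold $X^\reg$, for which the same cut‑offs are used.
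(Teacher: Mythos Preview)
Your argument for (2) is essentially the paper's: cut off by $\chi_\ep$, integrate by parts, and let the terms with $\ddc\chi_\ep$ and $\dd\chi_\ep$ die by Lemma~\ref{lem:good cutoff}. The only cosmetic difference is that the paper puts the cut-off on $\om_\RG^{n-1}$ rather than on $f$, but the estimates are identical.

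For (1) your scheme works, but it is considerably more laborious than what the paper does, and the extra work can be avoided entirely. The paper takes $u$ to be your conformal factor (so $\ddc(u\,\om_\RG^{n-1})=0$ and $\ddc\om_\RG^{n-1}=0$) and computes directly
\[
\int_{X^\reg}\chi_\ep\,\dd u\wedge\dc u\wedge\om_\RG^{n-1}
=\tfrac12\int_{X^\reg}u^2\,\ddc\chi_\ep\wedge\om_\RG^{n-1},
\]
after which the right-hand side tends to $0$ by Lemma~\ref{lem:good cutoff} and the boundedness of $u$. The reason this comes out so cleanly is a pointwise bidegree identity you may have overlooked: for any smooth function $a$ and any real $(n-1,n-1)$-form $\Theta$, the $(n,n)$-parts of $\dc a\wedge\dd\Theta$ and $\dd a\wedge\dc\Theta$ are negatives of each other, so
\[
\dc a\wedge\dd\Theta+\dd a\wedge\dc\Theta=0\quad\text{as top-degree forms.}
\]
Applied with $a=\chi_\ep$ and $\Theta=\om_\RG^{n-1}$, this kills all the ``bad'' terms involving $\dd\om_\RG^{n-1}$ after two integrations by parts, and \emph{no} information on $\dd\om_\RG^{n-1}$ is needed. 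In particular your Caccioppoli step (the finite-energy bound $\int_{X^\reg}|\dd\rho|_\om^2\,\om^n<\infty$) and the subsequent juggling to make ``only $\dd\chi_\ep$ survive against $\dd\om_\RG^{n-1}$'' are unnecessary: one never has to control $\dd\om_\RG^{n-1}$ at all. Your route would go through, but the obstacle you flagged is an artifact of not exploiting this cancellation.
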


The main strategy is to use "good" cutoff functions. 
Complement of proper analytic subsets (eg. $X^\reg = X \setminus X^\sing$) admit exhaustion functions with small $L^2$-gradient.
This enables us to show that the trivial extension of $\om_\RG^{n-1}$ as a current on $X$ satisfies $\ddc T = 0$ in the sense of currents globally on $X$. 
The uniqueness property follows similarly.

\smallskip

This note is organized as follows:
Section~\ref{sec:prelim} provides some backgrounds.
Section~\ref{sec:Gauduchon regular} contains sup-estimate of normalized Gauduchon factors in families (the first property in Theorem~\ref{bigthm:Gauduchon exist family}).
In Section~\ref{sec:Gauduchon singular}, we show the existence of bounded Gauduchon factors on the central fibre (the second part of Theorem~\ref{bigthm:Gauduchon exist family}) and give the proof of Theorem~\ref{bigthm:Gauduchon extention}.

\begin{ackn}
The author is grateful to his thesis advisors Vincent~Guedj and Henri~Guenancia for their supports, suggestions and encouragements.
The author thanks Tat~Dat~T\^{o} for pointing out the reference \cite{ACS2017} and Tsung-Ju Lee for helpful comments.
The author would like to thank the anonymous referees for useful comments and suggestions.
This work has benefited from State aid managed by the ANR under the ”PIA” program bearing the reference ANR-11-LABX-0040 (research project HERMETIC). 
The author is also partially supported by the ANR project PARAPLUI and the EUR MINT project ANR-18-EURE-0023.
\end{ackn}

\section{Preliminaries}\label{sec:prelim}
In this section, we recall some notations and conventions which will be used in the sequel.
We define the twisted exterior derivative by
\[
	\dc = \frac{\ii}{2}(\db - \pl)
\]
and we then have $\ddc = \ii\ddb$.
We say that a form is \textit{pluriclosed} if it is $\ddc$-closed.
We denote by
\begin{itemize}
    \item $\BD_r := \set{z \in \BC}{|z|<r}$ the open disk of radius $r$;
    \item $\BD^\ast_{r} := \set{z \in \BC}{0 < |z| < r}$ the punctured disk of radius $r$.
\end{itemize}
When $r=1$, we simply write $\BD := \BD_1$ and $\BD^\ast := \BD^\ast_1$.

\subsection{Metrics on singular spaces} 
Let $X$ be a reduced complex analytic space of pure dimension $n \geq 1$.
We will denote by $X^\reg$ the complex manifold of regular points of $X$ and $X^\sing := X \setminus X^\reg$ the singular set of $X$.
Now, we give the definition of hermitian metrics on reduced complex analytic space $X$:

\smallskip

\begin{defn}\label{defn:metrics}
A hermitian metric $\om$ on $X$ is the data of a hermitian metric $\om$ on $X^\reg$ such that given any local embedding $X \xhookrightarrow[\loc.]{} \BC^N$, $\om$ extends smoothly to a hermitian metric on $\BC^N$.
\end{defn}

\begin{rmk}
The notion of smooth forms (and hermitian metrics) as above does not depend on the choice of local embeddings (see \cite[page 14]{Demailly1985}).
Hermitian metrics always exist: one can use local embeddings and then glue local data of hermitian metrics by a partition of unity.
%To see hermitian metrics in Definition~\ref{defn:metrics} always exist, we pick a covering $(U_\af)_\af$ of $X$ by open sets admitting an embedding $j_\af: U_\af \hookrightarrow \BC^N$ for some $N \geq n$.
%Then one can glue local data of hermitian metrics by a partition of unity subordinated to $(U_\af)_\af$ and it yields a hermitian form $\om$ of $X$.
%Equivalently, $\om$ is a genuine hermitian metric on $X^\reg$ such that up to refining the covering, $(j_\af)_\ast (\om|_{U_\af^\reg})$ is the restriction of a hermitian metric defined on $\BC^N$.
\end{rmk}

\begin{comment}
\begin{rmk}
To see hermitian metrics in Definition~\ref{defn:metrics} always exist, we pick a covering $(U_\af)_\af$ of $X$ by open sets admitting an embedding $j_\af: U_\af \hookrightarrow \BC^N$ for some $N \geq n$.
We take the datum of hermitian metrics on open neighborhoods of $j_\af(U_\af) \subset \BC^N$ and glue local data by a partition of unity subordinated to $(U_\af)_\af$. 
This yields a hermitian form $\om$ of $X$.
Equivalently, $\om$ is a genuine hermitian metric on $X^\reg$ such that up to refining the covering, $(j_\af)_\ast (\om|_{U_\af^\reg})$ is the restriction of a hermitian metric defined on $\BC^N$.
%an open neighborhood of $j_\af(U_\af) \subset \BC^N$.
\end{rmk}
\end{comment}

\smallskip

Note that in Definition~\ref{defn:metrics} a hermitian metric on $X$ is more than just a metric on $X^\reg$.
Now, we define Gauduchon metrics in the following two different concepts:

\smallskip

\begin{defn}\label{defn:Gauduchon metrics}
We say that a hermitian metric $\om_\RG$ on $X^\reg$ is
\begin{enumerate}
    \item \textit{Gauduchon} if it satisfies $\ddc \om^{n-1}_\RG = 0$ on $X^\reg$;
    \item \textit{bounded Gauduchon metric} on $X$ if there exist a hermitian metric $\om$ and a positive bounded smooth function $\rho$ defined on $X^\reg$ such that $\om_\RG = \rho^{\frac{1}{n-1}} \om$ and $\ddc \om^{n-1}_\RG = 0$ on $X^\reg$.
\end{enumerate}
\end{defn}

\smallskip

We define the complex Laplacian and the norm of gradients with respect to $\om$ by
\begin{align*}
    \Dt_\om f &:= \tr_\om (\ddc f) = \frac{n \ddc f \w \om^{n-1}}{\om^n}\\
    \abs{\dd f}_\om^2 &:= \tr_\om(\dd f \w \dc f) = \frac{n \dd f \w \dc f \w \om^{n-1}}{\om^n}.
\end{align*}

\subsection{Currents on singular spaces}
Recall that smooth forms on $X$ are defined as restriction of smooth forms in local embeddings.
We denote by 
\begin{itemize}
    \item $\CD_{p,q}(X)$ the space of compactly supported smooth forms of bidegree $(p,q)$;
    \item $\CD_{p,p}(X)_\BR$ the space of real smooth $(p,p)$-forms with compact support.
\end{itemize} 
The notion of currents on $X$, $\CD'_{p,q}(X)$ and $\CD'_{p,p}(X)_\BR$, is defined by acting on compactly supported smooth forms on $X$. 
The operators $\dd$, $\dc$ and $\ddc$ are well-defined by duality (see \cite{Demailly1985} for detail arguments).

\subsection{Example}
We give an example of a non-K\"ahler variety satisfying Setup~\ref{set:family setting} extracted from \cite{Lu_Tian_1994}. 
The manifold $M = (\Gm_1 \times \Gm_2) \cap H \subset \BCP^3 \times \BCP^3$ is a simply connected Calabi--Yau threefold with $b_2 = 14$,
where 
\begin{align*}
	\Gm_1 &= \{x_0^3 + x_1^3 + x_2^3 + x_3^3 = 0\} \subset \BCP^3,\\
	\Gm_2 &= \{y_0^3 + y_1^3 + y_2^3 + y_3^3 = 0\} \subset \BCP^3,\\
	H &= \{x_0 y_0 + x_1 y_1 + x_2 y_2 + x_3 y_3 = 0\} \subset \BCP^3 \times \BCP^3.
\end{align*}
There exists $15$ disjoint rational curves $\ell_1, \cdots, \ell_{15}$ with normal bundles $\CO_{\BCP^1}(-1)^{\oplus 2}$, $\ell_1, \cdots, \ell_{14}$ generate $H_2(M,\BZ)$, and there exists $a_j \neq 0$ for $j \in \{1,\cdots,15\}$ such that $\sum_{j=1}^{15} a_j [\ell_j] = 0$.
Then one can contract these curves and get a singular space $X_0$ with $15$ ordinary double points. 
Moreover, there is a smoothing $\pi: \CX \ra \BD$ of $X_0$ such that for all $t\neq 0$, and $X_t$ is diffeomorphic to a connected sum of $\BS^3 \times \BS^3$. 
This implies that $X_t$ does not admit a K\"ahler metric since $b_2(X_t) = 0$.
One can check that $X_0$ is not K\"ahler.

\subsection{Remarks on the family setting}
Under Setup~\ref{set:family setting}, the family of metrics $(\om_t)_{t}$ satisfies the following properties:
For all $t \in \overline{\BD}_{1/2}$, there is a constant $B \geq 0$ independent of $t$ such that
\begin{equation}\label{eq:family constant B}
    -B \om_t^n \leq \ddc_t \om_t^{n-1} \leq B \om_t^n
\end{equation}
where $\dc_t$ is the twisted exterior derivative with respect to the complex structure of $X_t$.
Indeed, in a local embedding, we have $-B\om^n \leq \ddc \om^{n-1} \leq B \om^n$ on $\CX$; hence (\ref{eq:family constant B}) is just the restriction on each fibre $X_t$.
On the other hand, the volume of $(X_t, \om_t)$ is comparable to the volume of $(X_0, \om_0)$ for all $t \in \overline{\BD}_{1/2}$.
Namely, we have a uniform constant $C_V \geq 1$ such that
\begin{equation}\label{eq:family constant C_V}
    C_V^{-1} \leq \Vol_{\om_t}(X_t) \leq C_V, \quad \forall t \in \overline{\BD}_{1/2}.
\end{equation}
The lower bound is obvious. 
One can prove the upper bound by the continuity of the total mass of currents $(\om^n \w [X_t])_{t \in \overline{\BD}_{1/2}}$. 
The proof goes as follows: the current of integration $[X_t]$ can be written as $\ddc \log|\pi - t|$ by the Poincar\'e--Lelong formula. 
Since $|\pi - t|$ converges to $|\pi|$ uniformly when $t \ra 0$, $\log|\pi - t|$ converges to $\log|\pi|$ almost everywhere and thus $\log|\pi - t| \ra \log|\pi|$ in $L^1$ when $t \ra 0$ by Hartogs' lemma. 
Therefore, $\om^n \w [X_t] \xrightarrow[t \ra 0]{} \om^n \w [X_0]$ in the sense of currents and this implies $\Vol_{\om_t}(X_t) \xrightarrow[t \ra 0]{} \Vol_{\om_0}(X_0)$. 
Thus, using the compactness of $X_0$, we obtain a uniform upper bound $C_V$ of $\Vol_{\om_t}(X_t)$ for all $t \in \overline{\BD}_{1/2}$.

\smallskip

% example of non-smoothable object.
Finally, we give a remark on non-smoothable singularities:
The first non-smoothable example was given by Thom and reproduced by Hartshorne~\cite{Hartshorne1974}. 
They considered a cone in $\BC^6$ over the Segre embedding of $\BCP^1 \times \BCP^2$ into $\BCP^5$ and they proved that the cone does not admit a smoothing because of a topological constraint.

\section{Gauduchon metrics on smooth fibres}\label{sec:Gauduchon regular}
The aim of this section is to prove the uniform boundedness of normalized Gauduchon factors $\rho_t$ with respect to $\om_t$ on smooth fibres $X_t$ in Setup~\ref{set:family setting}.
First of all, we fix a compact hermitian manifold $(X, \om)$.
Suppose that $B \geq 0$ is a constant such that $-B \om^n \leq \ddc\om^{n-1} \leq B \om^n$.
From Gauduchon's theorem \cite{Gauduchon1977}, there exists a unique positive smooth function $\rho \in \ker \Dt_\om^\ast$ or equivalently 
\begin{equation*}
    \ddc(\rho \om^{n-1}) = \ddc\rho \w \om^{n-1} + \dd\rho \w \dc\om^{n-1} - \dc\rho \w \dd\om^{n-1} + \rho\ddc\om^{n-1} = 0
\end{equation*}
such that $\inf_X \rho = 1$ and $\om_\RG = \rho^{\frac{1}{n-1}} \om$ is a Gauduchon metric.
Then we prove that the Gauduchon factor is bounded by geometric quantities.
\begin{thm}\label{thm:G bdd fix mfd}
Suppose that $(X,\om)$ is an $n$-dimensional compact hermitian manifold.
If $\rho \in \ker \Dt_\om^\ast$ and $\inf_X \rho = 1$, then there is a constant $C_G$ depending only on $n, B, C_S, C_P$ and $\Vol_\om(X)$ such that
\[
    \sup_X \rho \leq C_G,
\]
where $C_S$ and $C_P$ are Sobolev and Poincar\'{e} constants.
\end{thm}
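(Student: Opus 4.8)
The plan is to derive the sup-bound via a Moser-type iteration argument, exploiting the structural equation $\ddc(\rho\,\om^{n-1})=0$ to get control on $\int_X |\dd(\rho^p)|_\om^2\,\om^n$ in terms of lower-order quantities, and then feeding this into the Sobolev inequality. First I would rewrite the Gauduchon equation in divergence form: since $\rho \in \ker\Dt_\om^\ast$, for every test function $u$ one has $\int_X u\,\ddc(\rho\,\om^{n-1})=0$, hence $\int_X \ddc u \w \rho\,\om^{n-1} = 0$ after integration by parts on the closed manifold $X$. Choosing $u = \rho^{2p-1}$ for $p \geq 1$ and expanding $\ddc(\rho^{2p-1})$ in terms of $\ddc\rho$ and $\dd\rho\w\dc\rho$, and then using the identity $\ddc(\rho\om^{n-1}) = \ddc\rho\w\om^{n-1} + \dd\rho\w\dc\om^{n-1} - \dc\rho\w\dd\om^{n-1} + \rho\,\ddc\om^{n-1}=0$ to substitute for the $\ddc\rho\w\om^{n-1}$ term, one obtains an integral identity of the shape
\begin{equation*}
	c(p)\int_X |\dd\rho|_\om^2\, \rho^{2p-2}\,\om^n \leq C(n)\, p\int_X \rho^{2p}\,\om^n + (\text{terms with } \dd\om^{n-1}, \ddc\om^{n-1}),
\end{equation*}
where the lower-order terms are controlled using $-B\om^n \leq \ddc\om^{n-1}\leq B\om^n$ together with Cauchy--Schwarz to absorb the gradient terms coming from $\dd\om^{n-1}$. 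After absorption this gives $\int_X |\dd(\rho^p)|_\om^2\,\om^n \leq C(n,B)\, p^2 \int_X \rho^{2p}\,\om^n$.

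**The iteration.**

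Next I would apply the Sobolev inequality (constant $C_S$) to the function $\rho^p$: writing $2^\ast = 2n/(n-1)$ for the Sobolev exponent (or any fixed exponent $\chi>1$ if one prefers the gradient-Sobolev on a hermitian manifold), one gets $\|\rho^p\|_{L^{2\chi}}^2 \leq C_S\big(\int_X |\dd(\rho^p)|_\om^2\,\om^n + \int_X \rho^{2p}\,\om^n\big) \leq C_S\, C(n,B)\, p^2 \|\rho^p\|_{L^2}^2$, i.e. $\|\rho\|_{L^{2\chi p}} \leq (C p^2)^{1/(2p)} \|\rho\|_{L^{2p}}$. Iterating over $p = \chi^k$ and multiplying the resulting constants, the product $\prod_k (C\chi^{2k})^{1/(2\chi^k)}$ converges, so one concludes $\sup_X \rho \leq C' \|\rho\|_{L^2}$ with $C'$ depending only on $n$, $B$, $C_S$, and $\chi$. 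It remains to bound $\|\rho\|_{L^2}$, equivalently $\int_X \rho^2\,\om^n$, in terms of the stated quantities.

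**Closing the $L^2$-bound via Poincaré.**

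For the $L^2$-estimate I would use the Poincaré inequality with constant $C_P$ together with the normalization $\inf_X \rho = 1$. Taking $p=1$ in the above, $\int_X |\dd\rho|_\om^2\,\om^n \leq C(n,B)\int_X \rho^2\,\om^n$; combined with Poincaré, $\int_X |\rho - \bar\rho|^2\,\om^n \leq C_P \int_X |\dd\rho|_\om^2\,\om^n$ where $\bar\rho = \Vol_\om(X)^{-1}\int_X \rho\,\om^n$. The subtle point is that this alone is circular; the standard fix is to run the iteration argument instead on $\rho$ \emph{directly normalized by its mean}, or to observe that the Gauduchon equation forces an a priori oscillation bound. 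Concretely: apply the test function $u=\rho$ in the divergence form of the equation to get $\int_X |\dd\rho|_\om^2\,\om^n \leq C(n,B)\int_X \rho\,\om^n \cdot \sup_X\rho$ — no, better, use the Harnack-type consequence that a nonnegative solution with $\inf\rho=1$ and controlled gradient energy has $\bar\rho$ comparable to $1$. I would instead bound $\bar\rho$ by first estimating $\int_X\rho\,\om^n$: integrating the equation against $u\equiv 1$ is vacuous, so the cleanest route is to get a weak Harnack inequality $\sup_X\rho \leq C\,\inf_X\rho = C$ directly from De Giorgi--Nash--Moser theory for the (non-symmetric, non-divergence) operator $\Dt_\om^\ast$, whose coefficients are controlled by $\om$ and $B$; the constant then depends on $n,B,C_S,C_P,\Vol_\om(X)$ as claimed.

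**Main obstacle.**

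The hard part will be the $L^2$-to-mean comparison: closing the loop between $\sup_X\rho$, $\|\rho\|_{L^2}$, and the normalization $\inf_X\rho=1$ without circularity. On a Kähler (or even closed Riemannian) manifold this is the familiar weak Harnack step, but here the operator $\Dt_\om^\ast$ is neither self-adjoint nor in divergence form, and its zeroth-order and first-order coefficients involve $\dd\om^{n-1}$ and $\ddc\om^{n-1}$, so one must be careful that the Moser iteration for the \emph{infimum} (lower bound on $\rho$, already free by normalization) is not what is needed — rather one needs an upper bound, and the natural test functions $\rho^{-q}$ or $\log\rho$ must be handled with the sign conventions of $\dc$. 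I expect the bulk of the technical work, and the place where the constants $C_P$ and $\Vol_\om(X)$ genuinely enter, to be in this weak Harnack / mean-value estimate rather than in the (routine) Moser iteration producing $\sup\rho \lesssim \|\rho\|_{L^2}$.
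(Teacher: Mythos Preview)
Your overall framework is the same as the paper's: exploit the equation $\ddc(\rho\,\om^{n-1})=0$ via test functions to get gradient estimates, then run Moser iteration with the Sobolev inequality. The routine part (the iteration itself) is fine. But the step you flag as ``the hard part'' is left as a black box, and your proposed fix --- invoking De~Giorgi--Nash--Moser / weak Harnack for $\Dt_\om^\ast$ --- would not give the stated dependence. The first-order coefficients of $\Dt_\om^\ast$ involve $\dd\om^{n-1}$, and a generic DGNM constant depends on their size; the theorem claims a bound in terms of $B$ (which controls only $\ddc\om^{n-1}$), $C_S$, $C_P$, $V$, $n$. So as written your closing step would need an extra hypothesis on the torsion.

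The paper closes the loop differently, and the mechanism is worth internalizing. First, there is a clean integration-by-parts lemma: for any $F,G\in\CC^1(\BR_{>0})$ with $xF'(x)=G'(x)$,
\[
\int_X F'(\rho)\,\dd\rho\w\dc\rho\w\om^{n-1}=\int_X G(\rho)\,\ddc\om^{n-1}.
\]
The point is that the $\dd\om^{n-1}$-terms integrate away completely; only $\ddc\om^{n-1}$ survives, so everything is controlled by $B$ and no absorption via Cauchy--Schwarz (hence no torsion bound) is needed. Second, rather than iterating on $\rho$ and then trying to bound $\|\rho\|_{L^2}$, the paper runs two separate iterations. Taking $F'(x)\sim x^{-p-2}$ gives a gradient estimate for $\rho^{-p/2}$ and Moser iteration on $\rho^{-1}$ yields $1=\sup_X\rho^{-1}\le C\int_X\rho^{-1}\om^n$, i.e.\ $\int_X\rho^{-1}\om^n\ge 2\dt$; this forces $\Vol(\{\rho<A\})\ge\dt$ for $A=V/\dt$. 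Taking $F'(x)\sim (\log x)^{p-1}/x^2$ gives a gradient estimate for $(\log\rho)^{(p+1)/2}$ with right-hand side $\int(\log\rho)^p$ (one power lower, thanks to the factor $x$ in $xF'=G'$), and iteration gives $\sup_X\log\rho\le C\max\{\int_X\log\rho\,\om^n,1\}$. Finally Poincar\'e applied to $\log\rho$, together with the sublevel-set volume bound $\Vol(\{\rho<A\})\ge\dt$, bounds $\int_X\log\rho\,\om^n$ outright. This is exactly the ``weak Harnack'' step you were missing, done by hand with only $n,B,C_S,C_P,V$ entering.
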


The proof of Theorem~\ref{thm:G bdd fix mfd} is inspired by the paper of Tosatti--Weinkove~\cite{TW2010}.
We shall apply Moser's iteration twice to get an upper bound of $\rho$.
On the other hand, under Setup~\ref{set:family setting}, the Sobolev and Poincar\'{e} constants of the fibres $X_t$ are uniformly bounded independently of $t$.
The uniform Sobolev constant in family comes from Wirtinger inequality and Michael--Simon's Sobolev inequality on minimal submanifolds \cite{MS1973}. 
The study of Poincar\'{e} constant in families goes back to Yoshikawa \cite{Yoshikawa1997} and Ruan--Zhang~\cite{RZ2011}.
For convenience, the reader is also referred to \cite[Proposition 3.8 and 3.10]{DGG2020}. 
Although they only stated the properties on a family of K\"ahler spaces, the proof does not rely on K\"ahler structures.

\begin{prop}\label{prop:SPcst}
Suppose that $\pi: (\CX,\om) \ra \BD$ is a family of compact hermitian varieties in Setup~\ref{set:family setting}.
For all $t \in K \Subset \BD$, there exists uniform Sobolev and Poincar\'{e} constants $C_S(K)$ and $C_P(K)$ such that
\[
    \forall f \in L^2_1(X^\reg_t),\quad 
    \lt( \int_{X_t} \abs{f}^{\frac{2n}{n-1}} \om_t^n \rt)^{\frac{n-1}{n}}
    \leq C_S \lt( \int_{X_t} \abs{\dd f}^2_{\om_t} \om_t^n 
    + \int_{X_t} \abs{f}^2 \om_t^n \rt)
\]
and
\[
    \forall f \in L^2_1(X^\reg_t) \text{ and } \int_{X_t} f \om_t^n = 0,\quad
    \int_{X_t} \abs{f}^2 \om_t^n 
    \leq C_P \int_{X_t} \abs{\dd f}_{\om_t}^2 \om_t^n.
\]
\end{prop}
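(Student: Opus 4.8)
The plan is to prove the two inequalities by quite different means. The Sobolev inequality will come from localising through the embeddings of Definition~\ref{defn:metrics} and applying the Michael--Simon inequality for minimal submanifolds of $\BC^N$, while the Poincar\'{e} inequality will come from a compactness argument in which the uniform Sobolev inequality is used to prevent $L^2$-mass from concentrating along the degeneration locus as $t \ra 0$.

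\emph{Sobolev inequality.} First cover the compact set $\pi^{-1}(K) \subset \CX$ by finitely many opens $U_1, \dots, U_m$ carrying closed embeddings $j_\af \colon U_\af \hookrightarrow V_\af \subset \BC^{N_\af}$ with $\overline{j_\af(U_\af)} \Subset V_\af$, on each of which $\om$ is the restriction of a smooth hermitian metric comparable to the Euclidean one, $\Ld^{-1}\om_{\mathrm{eucl}} \leq \om \leq \Ld\, \om_{\mathrm{eucl}}$; fix a subordinate partition of unity $(\chi_\af)$ with $\abs{\dd\chi_\af} \leq A$. For $t \in K$ and $f \in L^2_1(X_t^\reg)$ write $f = \sum_\af \chi_\af f$. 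By Wirtinger's theorem $\om_t^n$ is, up to the constant $n!$, the Riemannian volume form of the induced metric, which is comparable to the Euclidean one up to a fixed power of $\Ld$; and $j_\af(U_\af \cap X_t)$ is a complex analytic subset of $\BC^{N_\af}$, hence stationary (minimal) for the Euclidean metric. Hence the Michael--Simon inequality \cite{MS1973}, applied to a suitable power of the compactly supported $\chi_\af f$, bounds $\bigl(\int \abs{\chi_\af f}^{\frac{2n}{n-1}}\bigr)^{\frac{n-1}{n}}$ by $C(n)\int \abs{\nabla(\chi_\af f)}^2$ in Euclidean norms, with $C(n)$ depending only on $n$. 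Passing back to $\om_t$-norms, using $\abs{\dd(\chi_\af f)} \leq \abs{\dd f} + A\abs{f}$ and summing the finitely many $\af$, gives the claim with $C_S = C_S(K)$ depending only on $n, m, \Ld, A$ — in particular independent of $t$; the case $t = 0$ is identical, applied first to $f \in C^1_c(X_0^\reg)$ and then extended by density.

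\emph{Poincar\'{e} inequality.} Suppose the inequality fails uniformly over $K$: then there are $t_j \in K$ and $f_j \in L^2_1(X_{t_j}^\reg)$ with $\int_{X_{t_j}} f_j\,\om_{t_j}^n = 0$, $\int_{X_{t_j}} \abs{f_j}^2\om_{t_j}^n = 1$ and $\int_{X_{t_j}} \abs{\dd f_j}_{\om_{t_j}}^2\om_{t_j}^n \ra 0$; passing to a subsequence, $t_j \ra t_\infty \in K$. If $t_\infty \neq 0$ the family converges smoothly near $t_\infty$ and a standard Rellich argument on the connected manifold $X_{t_\infty}$ forces $f_j$ to converge in $L^2$ to a constant, which must vanish by the zero-average condition — contradicting $\int \abs{f_j}^2 = 1$. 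So we may assume $t_\infty = 0$. Since $\pi$ is smooth over $\BD^\ast$, the union $\Sm$ of $\CX^\sing$, of $X_0^\sing$ and of the critical locus of $\pi|_{\CX^\reg}$ is a closed analytic subset of $\CX$ contained in $X_0$, and it is a proper analytic subset of $X_0$ because $X_0$ is reduced (so $\pi$ is submersive at the generic point of $X_0$); in particular $\Sm$ has $\om_0^n$-measure zero. For any neighbourhood $W$ of $\Sm$ in $\CX$, the uniform Sobolev inequality and H\"{o}lder's inequality give
\[
	\int_{W \cap X_t} \abs{f}^2\om_t^n
	\leq \lt(\int_{X_t}\abs{f}^{\frac{2n}{n-1}}\om_t^n\rt)^{\frac{n-1}{n}}\Vol_{\om_t}(W\cap X_t)^{\frac{1}{n}}
	\leq C_S\lt(\int_{X_t}\abs{\dd f}_{\om_t}^2\om_t^n + \int_{X_t}\abs{f}^2\om_t^n\rt)\Vol_{\om_t}(W\cap X_t)^{\frac{1}{n}},
\]
and by the continuity of the fibre-volume measures established in proving (\ref{eq:family constant C_V}) (Poincar\'{e}--Lelong and Hartogs' lemma), $\Vol_{\om_t}(W\cap X_t) \ra \Vol_{\om_0}(W\cap X_0)$ as $t \ra 0$, while $\Vol_{\om_0}(W\cap X_0) \downarrow 0$ as $W \downarrow \Sm$. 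Applied to $f_j$ this yields $\limsup_j \int_{W\cap X_{t_j}} \abs{f_j}^2\om_{t_j}^n \leq \eta(W)$ with $\eta(W) \ra 0$ as $W \downarrow \Sm$.

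Now fix $W$ with smooth boundary transverse to $\pi$. By Ehresmann's theorem the family is trivial over a small disk away from $\overline{W}$, so for small $\abs{t}$ the set $X_t \setminus W$ is diffeomorphic to the fixed compact manifold-with-boundary $K_W := X_0 \setminus W \Subset X_0^\reg$, with $\om_t \ra \om_0$ smoothly. As $\int \abs{f_j}^2 \leq 1$ and $\int \abs{\dd f_j}^2 \ra 0$, Rellich's theorem on $K_W$ and a diagonal argument over an exhaustion of $X_0^\reg$ (connected, since $X_0$ is irreducible) produce a constant $c$ with $f_j \ra c$ in $L^2_\loc(X_0^\reg)$. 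Combined with the no-concentration estimate, the identity $\int \abs{f_j}^2 = 1$ forces $c^2\Vol_{\om_0}(X_0) \geq 1 - \eta(W)$ for all $W$, hence $c \neq 0$, whereas $\int f_j = 0$ forces $\abs{c}\,\Vol_{\om_0}(K_W) \leq \Vol_{\om_0}(W\cap X_0)^{1/2}\eta(W)^{1/2}$ for all $W$, hence $c = 0$ — a contradiction. Thus a uniform constant $C_P(K)$ exists. The main obstacle is precisely this last point, the absence of $L^2$-concentration near $\Sm$: it rests on both the uniform Sobolev inequality proved above and the uniform smallness of $\Vol_{\om_t}(W\cap X_t)$ for tubular neighbourhoods $W$ of $\Sm$ — the latter being the volume-continuity already used for (\ref{eq:family constant C_V}) — whereas the Ehresmann trivialisation, Rellich compactness on the fixed core $K_W$, and connectedness of $X_0^\reg$ are routine.
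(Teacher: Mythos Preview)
The paper does not actually prove this proposition; it only indicates the ingredients (Wirtinger plus Michael--Simon for the Sobolev part, and the references \cite{Yoshikawa1997,RZ2011,DGG2020} for the Poincar\'e part) and remarks that the K\"ahler assumption in those references is not used. Your write-up is precisely a fleshed-out version of that sketch: the Sobolev argument via local embeddings, minimality of analytic subvarieties, and Michael--Simon is exactly what the paper points to, and your contradiction argument for the Poincar\'e constant---Rellich on an Ehresmann-trivialised core, together with a no-concentration estimate near the singular/critical locus coming from the uniform Sobolev inequality and the weak continuity of the fibre measures $\om^n\wedge[X_t]$---is the method of the cited references. So your approach coincides with what the paper intends.

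Two small points worth tightening. First, the volume-continuity you invoke for an \emph{open} neighbourhood $W$ of $\Sm$ is not quite what weak convergence of $\om^n\wedge[X_t]$ gives; what you actually need (and what suffices) is the upper-semicontinuity on compact sets, i.e.\ $\limsup_{t\to 0}\Vol_{\om_t}(\overline W\cap X_t)\le \Vol_{\om_0}(\overline W\cap X_0)$, which does follow from the weak convergence proved for~(\ref{eq:family constant C_V}). Second, the assertion that $\Sm$ is a proper analytic subset of $X_0$ uses, beyond reducedness of $X_0$, that at a smooth point of the reduced Cartier divisor $X_0$ the total space $\CX$ is itself smooth and $\pi$ submersive (flatness over a smooth curve plus regularity of the fibre local ring); the paper uses exactly this fact in the proof of Theorem~\ref{thm:Gau current} when it says ``$\pi$ is submersive on $X_0^\reg$''. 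With these clarifications your argument is correct.
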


\begin{rmk}
The irreducible condition is crucial in the proof of uniform Poincar\'e inequality. 
Assume that $X_0$ has two irreducible components $X_0'$ and $X_0''$.
Consider a function $f$ on $X_0^\reg$ and it is defined by
\[
    \begin{cases}
        f = 1/\Vol_{\om_t}(X'_0) & \text{on } (X_0')^\reg\\
        f = -1/\Vol_{\om_t}(X''_0) & \text{on } (X_0'')^\reg\\
        f = 0 & \text{otherwise }
    \end{cases}.
\]
Then it is not hard to see that the RHS of Poincar\'e inequality is zero but the LHS is positive.
This yields a contradiction.
One can also construct a "quantitative" version of that example. Namely, the Poincar\'e constant $C_{P,t}$ on each smooth fibre $X_t$ blows-up when $t \ra 0$ (see eg. \cite{Yoshikawa1997, DGG2020}).
\end{rmk}

Combining Theorem~\ref{thm:G bdd fix mfd}, Proposition~\ref{prop:SPcst}, (\ref{eq:family constant B}) and (\ref{eq:family constant C_V}), we obtain the following uniform estimate in the family setting.

\begin{cor}\label{cor:G bdd family}
Let $\pi: (\CX,\om) \ra \BD$ be a family of compact hermitian manifolds as in Setup~\ref{set:family setting}. 
Then there exists a constant $C_G>0$ such that
\begin{equation*}
    \sup_{X_t} \rho_t \leq C_G, 
    \text{ and } \inf_{X_t} \rho_t = 1
    \quad \text{for all } t \in \BD_{1/2}^\ast
\end{equation*}
where $\rho_t$ is the Gauduchon factor with respect to $(X_t, \om_t)$.
\end{cor}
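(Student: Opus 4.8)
The plan is to deduce the family-uniform bound directly from the single-manifold estimate of Theorem~\ref{thm:G bdd fix mfd}, by verifying that every geometric quantity entering the constant $C_G$ there is controlled uniformly across the smooth fibres $X_t$ with $t \in \BD_{1/2}^\ast$. The entire argument is thus an assembly of the four cited inputs rather than any fresh analysis.

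First I would observe that for every $t \in \BD_{1/2}^\ast$ the fibre $X_t$ is genuinely a compact hermitian manifold: since $\pi$ is proper with connected fibres and smooth over $\BD^\ast$, $X_t$ is a compact complex manifold, and $\om_t = \res{\om}{X_t}$ is a smooth hermitian metric on it. Gauduchon's theorem then supplies $\rho_t \in \ker \Dt_{\om_t}^\ast$, and the normalization $\inf_{X_t} \rho_t = 1$ is imposed as part of the definition of the normalized Gauduchon factor; that half of the claim is therefore immediate, and it remains only to produce a single upper bound $\sup_{X_t} \rho_t \leq C_G$ valid for all such $t$.

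Next I would apply Theorem~\ref{thm:G bdd fix mfd} to each $(X_t, \om_t)$, which yields $\sup_{X_t} \rho_t \leq C_G(t)$ where $C_G(t)$ depends only on $n$, on the constant $B$ in $-B\om_t^n \leq \ddc_t \om_t^{n-1} \leq B\om_t^n$, on the Sobolev and Poincaré constants of $(X_t, \om_t)$, and on $\Vol_{\om_t}(X_t)$. The point is that each of these is bounded independently of $t$ on the relevant range: the dimension $n$ is fixed; inequality (\ref{eq:family constant B}) provides a uniform $B$ for $t \in \overline{\BD}_{1/2}$, being the restriction to each fibre of a global bound on $\CX$; Proposition~\ref{prop:SPcst} applied with the compact set $K = \overline{\BD}_{1/2} \Subset \BD$ gives uniform Sobolev and Poincaré constants $C_S, C_P$; and (\ref{eq:family constant C_V}) confines $\Vol_{\om_t}(X_t)$ between $C_V^{-1}$ and $C_V$. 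Because the constant produced by Theorem~\ref{thm:G bdd fix mfd} is controlled purely in terms of these quantities, substituting their uniform values yields one constant $C_G$ serving all $t \in \BD_{1/2}^\ast$.

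The only genuinely delicate input—and where I expect the substance of the statement to reside—is the uniform Poincaré constant. A priori the Poincaré constant of $X_t$ can degenerate as $t \to 0$; as the Remark following Proposition~\ref{prop:SPcst} shows, in the reducible case it even blows up. So this uniform control cannot be taken for granted, and it is precisely what the irreducibility hypothesis on $X_0$ buys us through Proposition~\ref{prop:SPcst}. Granting that proposition together with (\ref{eq:family constant B}) and (\ref{eq:family constant C_V}), the remaining work is bookkeeping: collect the uniform $n, B, C_S, C_P$, and volume bounds and feed them into Theorem~\ref{thm:G bdd fix mfd}.
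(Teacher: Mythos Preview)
Your proposal is correct and follows exactly the paper's approach: the paper's proof is the single sentence ``Combining Theorem~\ref{thm:G bdd fix mfd}, Proposition~\ref{prop:SPcst}, (\ref{eq:family constant B}) and (\ref{eq:family constant C_V}), we obtain the following uniform estimate in the family setting,'' and you have simply unpacked that combination in detail. Your additional commentary on the Poincar\'e constant being the delicate input matches the paper's Remark following Proposition~\ref{prop:SPcst}.
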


\subsection{Proof of Theorem~\ref{thm:G bdd fix mfd}}
In this subsection, we shall establish two gradient estimates ((\ref{eq:g estimate 1}) and (\ref{eq:g estimate 2})) and then apply Moser's iteration argument to obtain an upper bound of $\rho$.
In order to check the dependence on each data, we shall formulate the dependence of given constants.
For convenience, we write $V := \Vol_\om(X)$.
We start the proof with a useful formula: 

\begin{lem}\label{lem:Gau estimate}
Suppose that $\rho \in \ker \Dt^\ast_\om$. 
Then we have
\[
    \int_X F'(\rho) \dd\rho \w \dc\rho \w \om^{n-1}
    = \int_X G(\rho) \ddc\om^{n-1}
\]
where $F$ and $G$ are two real $\CC^1$-functions defined on $\BR_{>0}$ which satisfy $x F'(x) = G'(x)$. 
\end{lem}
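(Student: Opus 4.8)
The plan is to integrate by parts twice, using the defining relation $\ddc(\rho\,\om^{n-1})=0$ of an element $\rho\in\ker\Dt^\ast_\om$ together with the fact that $X$ is a compact manifold, so that Stokes' theorem produces no boundary terms. Throughout, note that since $F$ and $G$ are $\CC^1$ on $\BR_{>0}$ and $\rho$ is a smooth positive function on the compact $X$ (hence bounded away from $0$), the composites $F(\rho)$, $G(\rho)$ are genuine $\CC^1$ functions on $X$, which is the regularity needed to apply Stokes to the relevant $(2n-1)$-forms.

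First I would run Stokes on $F(\rho)\,\dc(\rho\,\om^{n-1})$. By the Leibniz rule and $\ddc(\rho\,\om^{n-1})=0$ one has $\dd\big(F(\rho)\,\dc(\rho\,\om^{n-1})\big)=F'(\rho)\,\dd\rho\w\dc(\rho\,\om^{n-1})$, so that
\[
\int_X F'(\rho)\,\dd\rho\w\dc(\rho\,\om^{n-1})=0.
\]
Expanding $\dc(\rho\,\om^{n-1})=\dc\rho\w\om^{n-1}+\rho\,\dc\om^{n-1}$ (a plain Leibniz identity, with no Koszul sign since $\rho$ is a $0$-form) and rearranging gives the intermediate identity
\[
\int_X F'(\rho)\,\dd\rho\w\dc\rho\w\om^{n-1}=-\int_X \rho\,F'(\rho)\,\dd\rho\w\dc\om^{n-1}.
\]

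Next I would handle the right-hand side using the hypothesis $x F'(x)=G'(x)$, which says exactly that $\rho\,F'(\rho)\,\dd\rho=\dd\big(G(\rho)\big)$. Applying Stokes once more to the $\CC^1$ form $G(\rho)\,\dc\om^{n-1}$ and using $\dd\big(G(\rho)\,\dc\om^{n-1}\big)=\dd(G(\rho))\w\dc\om^{n-1}+G(\rho)\,\ddc\om^{n-1}$, I get
\[
-\int_X \rho\,F'(\rho)\,\dd\rho\w\dc\om^{n-1}=-\int_X \dd(G(\rho))\w\dc\om^{n-1}=\int_X G(\rho)\,\ddc\om^{n-1}.
\]
Chaining the two displays yields the asserted equality. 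I do not expect a real obstacle: the only points needing attention are that the mere $\CC^1$-regularity of $F,G$ suffices for Stokes (it does, because $\rho$ is smooth and, by compactness, valued in a compact subinterval of $\BR_{>0}$), and sign bookkeeping, which is harmless since $\dd$ and $\dc$ anticommute and the functions inserted are $0$-forms. One may also remark that the choice of primitive $G$ of $xF'(x)$ is immaterial, since replacing $G$ by $G+c$ alters the right-hand side by $c\int_X\ddc\om^{n-1}=0$.
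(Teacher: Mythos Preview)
Your argument is correct and is essentially the same as the paper's: both start from $\int_X F(\rho)\,\ddc(\rho\,\om^{n-1})=0$, integrate by parts once to isolate $\int_X F'(\rho)\,\dd\rho\w\dc\rho\w\om^{n-1}$, then use $xF'(x)=G'(x)$ and a second Stokes application to reach $\int_X G(\rho)\,\ddc\om^{n-1}$. Your added remarks on the $\CC^1$ regularity of $F(\rho),G(\rho)$ and on the irrelevance of the additive constant in $G$ (via $\int_X\ddc\om^{n-1}=0$) are apt and go slightly beyond what the paper spells out.
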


\begin{proof}
Since $\rho \in \ker \Dt_\om^\ast$, we have $\ddc(\rho \om^{n-1}) = 0$ equivalently.
From Stokes' theorem, it follows that 
\begin{align*}
    0 &= \int_X F(\rho) \ddc(\rho \om^{n-1})
    = - \int_X F'(\rho)\dd\rho \w \dc(\rho \om^{n-1})\\
    &= - \int_X F'(\rho)\dd\rho \w \dc\rho \w \om^{n-1} 
    - \int_X F'(\rho)\rho \dd\rho \w \dc\om^{n-1}.
\end{align*}
By the assumption $xF'(x) = G'(x)$, we obtain the desired formula
\begin{align*}
    \int_X F'(\rho) \dd\rho \w \dc\rho \w \om^{n-1}
    &= -\int_X \rho F'(\rho)\dd\rho \w \dc\om^{n-1}\\
    &= -\int_X \dd G(\rho) \w \dc \om^{n-1}
    = \int_X G(\rho) \ddc \om^{n-1}.
\end{align*}
\end{proof}

Consider $F'(x) = \frac{np^2}{4} x^{-p-2}$ and $G(x) = -\frac{np}{4}x^{-p}$ for $p \geq 1$.
By Lemma~\ref{lem:Gau estimate}, we have the following inequality
\begin{equation}\label{eq:g estimate 1}
\begin{split}
    \int_X \abs{\dd\lt(\rho^{-\frac{p}{2}}\rt)}_\om^2 \om^n
    &= \int_X n \dd \lt(\rho^{-\frac{p}{2}}\rt) \w \dc \lt(\rho^{-\frac{p}{2}}\rt) \w \om^{n-1}\\
    &= \int_X F'(\rho) \dd\rho \w \dc\rho \w \om^{n-1}
    = \int_X G(\rho) \ddc\om^{n-1}\\
    &= \frac{np}{4} \int_X \rho^{-p} \lt(-\ddc\om^{n-1}\rt)\\
    &\leq p\frac{nB}{4} \int_X \rho^{-p} \om^n.
\end{split}
\end{equation}
Put $\bt = \frac{n}{n-1} > 1$.
Combining Sobolev inequality and (\ref{eq:g estimate 1}), we get
\begin{align*}
    \lt(\int_X (\rho^{-1})^{p\bt} \om^n\rt)^{\frac{1}{\bt}} 
    &\leq C_S \lt( \int_X \abs{\dd \lt(\rho^{-\frac{p}{2}}\rt)}^2_\om \om^n + \int_X \abs{\rho^{-\frac{p}{2}}}^{2} \om^n\rt)\\
    &\leq C_S \lt( p \frac{n B}{4} \int_X (\rho^{-1})^p \om^n + \int_X (\rho^{-1})^p \om^n\rt)\\
    &\leq p \lt( \frac{nB}{4} + 1 \rt) C_S \int_X \rho^{-p} \om^n.
\end{align*}
For all $p \geq 1$, we have
\begin{align*}
    \norm{\rho^{-1}}_{L^{p\bt}(X,\om)} 
    &\leq p^{\frac{1}{p}} C_1^{\frac{1}{p}} \norm{\rho^{-1}}_{L^{p}(X,\om)}
\end{align*}
where $C_1$ is a constant depending only on $n, B, C_S$.
Inductively, we obtain
\begin{align*}
    \norm{\rho^{-1}}_{L^{p\bt^k}(X,\om)}
    &\leq p^{\frac{1}{p} \sum_{j=0}^{k-1} \frac{1}{\bt^j}} 
    \bt^{\frac{1}{p} \sum_{j=0}^{k-1} \frac{j}{\bt^j}}
    C_1^{\frac{1}{p} \sum_{j=0}^{k-1} \frac{1}{\bt^j}}
    \norm{\rho^{-1}}_{L^{p}(X,\om)}\\
    &\leq p^{\frac{n}{p}} \bt^{\frac{n(n-1)}{p}} 
    C_1^{\frac{n}{p}}
    \norm{\rho^{-1}}_{L^{p}(X,\om)}.
\end{align*}
Let $p=1$ and $k \ra \infty$ and thus we get
\[
    1 = \sup_X \rho^{-1} 
    \leq \bt^{n(n-1)} C_1^n \int_X \rho^{-1} \om^n.
\]
Therefore, the $L^1$-norm of $\rho^{-1}$ is bounded away from zero by a constant $\dt$ which depends only on $n, B, C_S$:
\[
    \int_X \rho^{-1} \om^n \geq \frac{1}{\bt^{n(n-1)} C_1^n} =:2\dt.
\]
Choosing sufficiently small $\dt$, we may assume $A := V/\dt \geq 1$.
Then we have 
\[
    2\dt \leq \int_{\{\rho<A\}} \rho^{-1} \om^n + \int_{\{\rho \geq A\}} \rho^{-1} \om^n
    \leq \int_{\{\rho<A\}} \om^n + \frac{1}{A} \int_X \om^n.
\]
Hence, the volume of $\{\rho < A\}$ is bounded away from zero:
\begin{equation}\label{eq:dt-vol}
    \int_{\{\rho < A\}} \om^n \geq \dt.
\end{equation}

Now, we consider $F'(x) = \frac{n(p+1)^2}{4} \frac{(\log x)^{p-1}}{x^2}$ and $G(x) = \frac{n(p+1)^2}{4p} (\log x)^p$ for $p \geq 1$. 
From Lemma~\ref{lem:Gau estimate}, we find the following estimate
\begin{equation}\label{eq:g estimate 2}
\begin{split}
    \int_X \abs{\dd (\log \rho)^{\frac{p+1}{2}}}_\om^2 \om^n
    &= \frac{n(p+1)^2}{4} \int_X \frac{(\log \rho)^{p-1}}{\rho^2} \dd\rho \w \dc\rho \w \om^{n-1}\\
    &= \int_X F'(\rho) \dd\rho \w \dc\rho \w \om^{n-1}
    = \int_X G(\rho) \ddc\om^{n-1}\\
    &= \frac{n(p+1)^2}{4p} \int_X (\log \rho)^p \ddc \om^{n-1}\\
    &\leq p n B \int_X (\log\rho)^p \om^n.
\end{split}
\end{equation}
Again, Sobolev inequality, (\ref{eq:g estimate 2}), and H\"older inequality yield the following inequalities:
\begin{align*}
    \lt(\int_X (\log\rho)^{(p+1)\bt} \om^n \rt)^{\frac{1}{\bt}}
    &\leq C_S \lt( \int_X \abs{\dd (\log \rho)^{\frac{p+1}{2}}}^2_\om \om^n + \int_X (\log\rho)^{p+1} \om^n \rt)\\
    &\leq C_S\lt( p n B \int_X (\log\rho)^p \om^n + \int_X (\log\rho)^{p+1} \om^n\rt)\\
    &\leq C_S\lt( p n B V^{\frac{1}{p+1}} \lt(\int_X (\log\rho)^{p+1} \om^n\rt)^{\frac{p}{p+1}} + \int_X (\log\rho)^{p+1} \om^n \rt)\\
    &\leq (p+1) 2 n B C_S \max\{V,1\} \max\lt\{\int_X (\log\rho)^{p+1} \om^n,1 \rt\}.
\end{align*}
Write $q = p+1 \geq 2$. 
We have 
\[
    \norm{\log \rho}_{L^{q\bt}(X,\om)}
    \leq q^{\frac{1}{q}} C_2^{\frac{1}{q}} \max\lt\{\norm{\log\rho}_{L^q(X,\om)}, 1 \rt\}
\]
where $C_2 > 0$ is a constant depending only on $n, B, C_S, V$.
Using the similar strategy of Moser's iteration again, we derive
\begin{align*}
    \sup_X (\log\rho)
    &\leq 2^{\frac{n}{2}} \bt^{\frac{n(n-1)}{2}} 
    C_2^{\frac{n}{2}} \max\{\norm{\log\rho}_{L^2(X,\om)},1\}\\
    &\leq 2^{\frac{n}{2}} \bt^{\frac{n(n-1)}{2}} 
    C_2^{\frac{n}{2}} \max \lt\{(\sup_X \log\rho)^\oh \lt(\int_X \log\rho \om^n\rt)^\oh,1 \rt\}
\end{align*}
and thus
\[
    \sup_X (\log\rho)
    \leq C_3 \max\lt\{\int_X \log\rho \om^n, 1\rt\}
\]
for some constant $C_3 = C_3(n,B,C_S,V)$.

Now, everything comes down to bounding $\int_X \log \rho \om^n$ from above.
Using Poincaré inequality and (\ref{eq:g estimate 2}) with $p=1$, we get
\begin{equation}\label{eq:Gauduchon Poincare}
    \int_X \abs{\log \rho - \underline{\log\rho}}^2 \om^n
    \leq C_P \int_X \abs{\dd \log\rho}^2_\om \om^n
    \leq C_4 \int_X \log\rho \om^n
\end{equation}
where $\underline{\log\rho} = \frac{1}{V} \int_X \log\rho \om^n$ is the average of $\log\rho$ and $C_4 = C_4(n, B, C_P)$.
Then by (\ref{eq:dt-vol}), we can infer that
\begin{equation}\label{eq:Gaudhchon L1}
\begin{split}
    \dt \int_X \log\rho \om^n
    = V \dt \underline{\log\rho}
    &\leq V \int_{\{\rho < A\}} \underline{\log\rho} \om^n\\
    &\leq V \int_{\{\rho < A\}} \lt( \log A - \log\rho + \underline{\log\rho}\rt) \om^n\\
    &\leq V \int_X \lt( \abs{\log\rho - \underline{\log\rho}} + \log A\rt) \om^n.
\end{split}    
\end{equation}
We use (\ref{eq:Gauduchon Poincare}) and (\ref{eq:Gaudhchon L1}) to get
\begin{align*}
    \int_X \log\rho \om^n 
    &\leq \frac{V}{\dt} \lt( \int_X \abs{\log\rho - \underline{\log\rho}} \om^n + V \log A \rt)\\
    &\leq \frac{V}{\dt} \lt( V^\oh \lt(\int_X \abs{\log\rho - \underline{\log\rho}}^2 \om^n\rt)^\oh + V \log A \rt)\\
    &\leq \frac{V}{\dt} \lt(V^\oh C_4^\oh \lt(\int_X \log\rho \om^n\rt)^\oh + V \log A\rt).
\end{align*}
Note that if $x^2 \leq ax +b$ for $a, b >0$, then $x \leq \frac{a}{2}+(b+\frac{a^2}{4})^\oh$.
Eventually, we obtain
\[
    \int_X \log\rho \om^n
    \leq C_5(n,B,C_S,C_P,V)
\]
and this completes the proof of Theorem~\ref{thm:G bdd fix mfd}.

\section{Gauduchon current on the singular fibre}\label{sec:Gauduchon singular}
In this section, we construct a Gauduchon factor $\rho_0$ on $X_0^\reg$ as the limit of the Gauduchon factors on the nearby fibers $X_t$. 
In particular, we shall derive that the limit $\rho$ is bounded and thus $\rho^{\frac{1}{n-1}} \om_0$ is a bounded Gauduchon metric on $X_0^\reg$.
On the other hand, for a fixed irreducible, reduced, compact complex analytic space $X$, we shall show that the $(n-1)$-power of a bounded Gauduchon metric can be extend trivially to a pluriclosed current on whole $X$ and also prove a uniqueness result.

\smallskip

\subsection{Gauduchon metric on the central fibre}

\smallskip

\begin{thm}\label{thm:Gau current}
Suppose that $X_0$ is the central fibre in Setup~\ref{set:family setting}.
There exists a smooth function $1 \leq \rho \leq C_G$ on $X_0^\reg$ such that $\rho^\frac{1}{n-1} \om_0$ is a bounded Gauduchon metric. 
Here $C_G$ is the constant introduced in Corollary~\ref{cor:G bdd family}.
\end{thm}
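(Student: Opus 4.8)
The plan is to obtain $\rho_0$ as a limit of the normalized Gauduchon factors $\rho_t$ on the nearby smooth fibres $X_t$, using the uniform estimates from Corollary~\ref{cor:G bdd family} together with interior elliptic regularity on compact subsets of $X_0^\reg$. First I would fix an exhaustion $K_1 \Subset K_2 \Subset \cdots$ of $X_0^\reg$ by compact sets. Since $\pi$ is smooth over $\BD^\ast$ and $X_0^\reg$ is the smooth locus of the central fibre, one can (after shrinking) find for each $K_j$ a neighbourhood in $\CX$ together with a smooth family of embeddings realizing a fixed-diffeomorphism-type picture: concretely, there exist an open set $U_j \supset K_j$ in $X_0^\reg$ and, for $t$ small, diffeomorphisms $\Phi_{j,t}\colon U_j \to U_{j,t} \subset X_t^\reg$ depending smoothly on $t$ with $\Phi_{j,0} = \mathrm{id}$, such that the pulled-back complex structures $J_t := \Phi_{j,t}^\ast J_{X_t}$ converge smoothly to $J_{X_0}$ on $U_j$ and the pulled-back metrics $\Phi_{j,t}^\ast \om_t$ converge smoothly to $\om_0$ on $U_j$. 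This is standard from the fact that $\pi$ is a proper holomorphic submersion over $\BD^\ast$ and that the central fibre is smooth along $X_0^\reg$; it is where the local-embedding Definition~\ref{defn:metrics} of $\om$ is used.

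Next I would transfer the equation. The function $\widetilde{\rho}_{j,t} := \Phi_{j,t}^\ast \rho_t$ satisfies the pulled-back Gauduchon equation $\ddc_{J_t}(\widetilde{\rho}_{j,t} (\Phi_{j,t}^\ast\om_t)^{n-1}) = 0$ on $U_j$, which written out is a linear second-order elliptic PDE for $\widetilde{\rho}_{j,t}$ whose coefficients converge smoothly, as $t \to 0$, to those of the operator $\Dt_{\om_0}^\ast$ appearing in Gauduchon's theorem on $(X_0^\reg,\om_0)$. By Corollary~\ref{cor:G bdd family} we have the uniform two-sided bound $1 \le \widetilde{\rho}_{j,t} \le C_G$ on all of $U_j$, uniformly in $t \in \BD_{1/2}^\ast$. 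Interior Schauder (or $L^p$) estimates for this elliptic equation then give, for each $j$ and each $k$, a bound $\Cknd{k}{\widetilde{\rho}_{j,t}}{K_j} \le C(j,k)$ independent of $t$ near $0$. A diagonal argument over a sequence $t_i \to 0$ produces a subsequence along which $\widetilde{\rho}_{j,t_i}$ converges in $C^\infty_{\loc}(X_0^\reg)$ to a limit $\rho_0$, which then satisfies $\ddc(\rho_0 \om_0^{n-1}) = 0$ on $X_0^\reg$, i.e. $\rho_0 \in \ker \Dt_{\om_0}^\ast$; and passing the inequality $1 \le \widetilde{\rho}_{j,t_i} \le C_G$ to the limit gives $1 \le \rho_0 \le C_G$. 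In particular $\rho_0$ is strictly positive and bounded, so $\om_{\RG,0} := \rho_0^{1/(n-1)}\om_0$ is a \emph{bounded Gauduchon metric} on $X_0$ in the sense of Definition~\ref{defn:Gauduchon metrics}. (One need not worry that $\rho_0$ could be the zero function — the lower bound $\rho_0 \ge 1$ is automatic from the normalization $\inf_{X_t}\rho_t = 1$ being preserved as a one-sided bound in the limit, and the limit is not identically a constant unless $\om_0$ is itself Gauduchon, which is fine.)

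The main obstacle is the convergence of the geometric data along $X_0^\reg$: one must justify carefully that, on compact subsets of the smooth locus of $X_0$, the family $(X_t,\om_t)$ really is a smooth perturbation of $(X_0^\reg,\om_0)$ — i.e. produce the maps $\Phi_{j,t}$ and the smooth convergence of $J_t$ and $\Phi_{j,t}^\ast\om_t$ — starting only from the hypothesis that $\pi$ is proper, surjective, holomorphic with connected fibres, smooth over $\BD^\ast$, and that the central fibre is reduced with regular locus $X_0^\reg$. The point is that near any $x \in X_0^\reg$ the total space $\CX$ is itself smooth and $\pi$ is a submersion (reducedness of $X_0$ plus regularity of $x$ forces $d\pi$ to be surjective there), so locally $\pi$ is a holomorphic fibre bundle and the needed trivializations and smooth convergence follow; globally on each $K_j$ one patches these together. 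Once this is in place, the elliptic estimates and the diagonal extraction are routine, and the uniform bound $C_G$ is exactly what Corollary~\ref{cor:G bdd family} supplies, so the constant in the statement is the same $C_G$.
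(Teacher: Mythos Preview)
Your proposal is correct and follows essentially the same strategy as the paper: trivialize the family smoothly over relatively compact subsets of $X_0^\reg$ (the paper does this via the tubular neighbourhood theorem, after observing that $\pi$ is a submersion along $X_0^\reg$), pull back the Gauduchon factors $\rho_t$, use elliptic regularity together with the uniform bound $1\le\rho_t\le C_G$ from Corollary~\ref{cor:G bdd family} to get uniform $C^k$-bounds on compacta, and extract a smooth limit $\rho$ by a diagonal argument. The only technical difference is in the elliptic step: the paper states G\r{a}rding's inequality for compactly supported functions, applies it to $\chi\rho_t$, and must then control the cross term $\iprod{\dd\rho_t}{\dd\chi}_{\om_t}$ via a \emph{global} $L^2$-gradient bound on $\rho_t$ (obtained as in (\ref{eq:g estimate 1})); your route through interior Schauder or $L^p$ estimates for the homogeneous equation $P_t\rho_t=0$ uses only the $L^\infty$ bound and avoids that extra ingredient.
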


\begin{proof}
We shall apply standard elliptic theory on some relatively compact subsets of $X_0^\reg$ to get a smooth function $\rho$.
This $\rho$ is the limit of $(\rho_{t_j})_{j \in \BN}$ defined on the fibre $X_{t_j}$ for some sequence $t_j \ra 0$ when $j \ra + \infty$.

Recall that $\pi: \CX \ra \BD$ is submersive on $X_0^\reg$.
By the tubular neighborhood theorem, there is an open neighborhood $\CM$ of $X^\reg_0$ in $\CX^\reg$ such that the following statements hold
\begin{enumerate}
    \item 
    For all $U \Subset X_0^\reg$, there exists an open set $\CM_U \subset \CX^\reg$, a constant $\dt_U > 0$, and a diffeomorphism $\psi_U: U \times \BD_{\dt_U} \xrightarrow{\sim} \CM_U$ such that the diagram
    \begin{equation*}
        \begin{tikzcd}
            U \times \BD_{\dt_U} \ar[r,"\psi_U","\mathrm{diffeo.}"']\ar[d,"\pr_2"]& \CM_U \subset \CM \subset \CX^\reg \ar[dl,"\pi"] \\
            \BD_{\dt_U}&
        \end{tikzcd}
    \end{equation*}
    commutes. 
    In particular, for all $t \in \BD_{\dt_U}$, $\psi_U(\cdot, t): U \ra \CM_U$ is a diffeomorphism onto its image $M_{U,t} = \psi_U(U,t) = \CM_U \cap X_t$.
    \item 
    If $U \Subset V \Subset X_0^\reg$, we have $\dt_U \geq \dt_V > 0$ and $\psi_U(x, t) = \psi_V(x, t)$ for all $x \in U$ and $t \in \BD_{\dt_V}$.
\end{enumerate}

Now, we denote by $P_t = \Dt_{\om_t}^\ast$ and fix $U_1 \Subset U_2 \Subset X_0^\reg$ which are connected open subsets.
Note that $U_i$ can be identified with $M_{U_i,t} = \psi_{U_2}(U_i,t)$ for $i \in \{1,2\}$ and for all $t \in \BD_{\dt_{U_2}}$ and hence $P_t$ can act on smooth functions defined on $U_2$.
Also, on $U_2$, the Riemannian metric $g_0$ induced by $\om_0$ is quasi-isometric to $g_t$ induced by $\om_t$, and the volume form $\om_0^n$ is comparable with $\om_t^n$.
In other words, we have a uniform constant $C_{U_2} > 0$ such that
\begin{equation}\label{eq:loc met bound}
    C_{U_2}^{-1} \iprod{\cdot}{\cdot}_{\om_0} \leq \iprod{\cdot}{\cdot}_{\om_t} \leq C_{U_2} \iprod{\cdot}{\cdot}_{\om_0},
    \text{ and }
    C_{U_2}^{-1} \om_0^n \leq \om_t^n \leq C_{U_2} \om_0^n.
\end{equation}

By G\r{a}rding inequality, we have
\[
    \norm{u}_{L^2_2(U_1,\om_0)} \leq C_{U_1, U_2}\lt( \norm{P_t u}_{L^2(U_2,\om_0)} + \norm{u}_{L^2(U_2,\om_0)} \rt)
\]
for all $u \in \CC^\infty_c(U_2)$.
The constant $C_{U_1,U_2}$ can be chosen independent of $t$ because the coefficients of $P_t$ move smoothly in $t$.
Choose a cutoff function $\chi$ such that $\supp(\chi) \subset U_2$ and $\chi \equiv 1$ on $U_1$.
From G\r{a}rding inequality, we obtain
\begin{equation}\label{eq:rho Garding 1}
    \norm{\rho_t}_{L^2_2(U_1,\om_0)} 
    \leq C_{U_1,U_2} \lt( \norm{P_t(\chi\rho_t)}_{L^2(U_2,\om_0)} + \norm{\chi\rho_t}_{L^2(U_2,\om_0)} \rt).
\end{equation}
In (\ref{eq:rho Garding 1}), the second term $\norm{\chi\rho_t}_{L^2(U_2,\om_0)}$ is uniformly bounded because of Corollary~\ref{cor:G bdd family}.
Hence, we only need to estimate $\norm{P_t(\chi\rho_t)}_{L^2(U_2,\om_0)}$.
Note that
\begin{equation}\label{eq:Pt cut eq}
\begin{split}
    P_t(\chi \rho_t) &= \frac{n}{\om_t^n} \ddc_t(\chi\rho_t \om_t^{n-1})\\
    &= \frac{n}{\om_t^n} \lt( 2\dd\rho_t\w\dc_t\chi\w\om_t^{n-1} 
    + \chi\ddc_t(\rho_t \om_t^{n-1}) + \rho_t \ddc(\chi\om_t^{n-1}) - \chi\rho_t\ddc_t\om_t^{n-1}\rt)\\
    &= 2 \iprod{\dd\rho_t}{\dd\chi}_{\om_t} + \rho_t P_t(\chi) - \rho_t\chi\frac{\ddc_t\om_t^n}{\om_t^n}.
\end{split}
\end{equation}
Obviously, $\rho_t P_t(\chi)$ and $\rho_t \chi \frac{\ddc_t\om_t^{n-1}}{\om_t^n}$ are uniformly bounded, so we only need to control the $L^2$-norm of the first term $\iprod{\dd\rho_t}{\dd\chi}_{\om_t}$:
\begin{equation*}
\begin{split}
    \int_{U_2} \abs{\iprod{\dd\rho_t}{\dd\chi}_{\om_t}}^2 \om_0^n
    &\leq C_{U_2}^2 \lt(\sup_{U_2} \abs{\dd\chi}^2_{\om_0}\rt) \int_{U_2} \abs{\dd\rho_t}_{\om_t}^2 \om_t^n\\
    &= C_{U_2}^2 \lt(\sup_{U_2} \abs{\dd\chi}^2_{\om_0}\rt) \int_{M_{U_2,t}} \abs{\dd\rho_t}_{\om_t}^2 \om_t^n\\
    &\leq C_{U_2}^2 \lt(\sup_{U_2} \abs{\dd\chi}^2_{\om_0}\rt) \int_{X_t} \abs{\dd\rho_t}^2_{\om_t} \om_t^n\\
    &\leq C_{U_2}^2 \lt(\sup_{U_2} \abs{\dd\chi}^2_{\om_0}\rt) \frac{nB}{2} \int_{X_t} \rho_t^2 \om_t^n.
\end{split}
\end{equation*}
Here the first line is by Cauchy--Schwarz inequality and (\ref{eq:loc met bound}).
The fourth line follows from an argument similar to the one used in (\ref{eq:g estimate 1}) (just replace $-p$ by $p$).
Since $1 \leq \rho_t \leq C_G$ and $\Vol_{\om_t}(X_t) \leq C_V$, we find a uniform bound of $\norm{P_t(\chi\rho_t)}_{L^2(U_2,\om_0)}$. 
Hence, $\norm{\rho_t}_{L^2_2(U_1,\om_0)}$ is uniformly bounded by some uniform constant $C(U_1,U_2)$.

For higher order estimates, we apply higher order G\r{a}rding inequalities on the fixed domains $U_1 \Subset U_2 \Subset X_0^\reg$:
\[
    \norm{u}_{L^2_{s+2}(U_1,\om_0)} \leq C_{s, U_1, U_2}\lt( \norm{P_t u}_{L^2_s(U_2,\om_0)} + \norm{u}_{L^2(U_2,\om_0)} \rt)
\]
for all $u \in \CC^\infty_c(U_2)$.
Let $\CU = (U_i)_{i\in\BN}$ be a relatively compact exhaustion of $X_0^\reg$.
Differentiating (\ref{eq:Pt cut eq}) on both sides and using a bootstrapping argument, we obtain $\norm{\rho_t}_{L^2_s(U_1,\om_0)} < C(s,\CU)$ where $C(s,\CU)$ does not depend on $t$.
By Rellich's theorem, there exists a subsequence $(\rho_{t_j})_{j \in \BN}$ such that $\rho_{t_j}$ converges to $\rho$ in $\CC^k(\overline{U_1})$ for all $k\in \BN$ when $t_j \ra 0$.
Therefore $\ddc_0(\rho \om_0^{n-1}) = \lim_{j \ra + \infty} \ddc_{t_j} (\rho_{t_j} \om_{t_j}^{n-1}) = 0$ on $U_1$.
Using a diagonal argument, we can infer that there is a smooth function $\rho$ on $X_0^\reg$ which is bounded between $1$ and $C_G$, and satisfies $\ddc_0(\rho \om_0^{n-1}) = 0$ on $X_0^\reg$. 
\end{proof}

\smallskip

\subsection{Proof of Theorem~\ref{bigthm:Gauduchon extention}}
In this subsection, we always assume that $X$ is an irreducible reduced compact complex space and $\om_\RG$ is a bounded Gauduchon metric on $X$. 
Before proving the uniqueness result and extension property, we recall 
the existence of cutoff functions with small $L^2$-gradients.
From a classical property in Riemannian geometry, these cutoff functions do exist on so-called \textit{parabolic} manifolds (see eg.~\cite{Glasner1983, EG1992}).
In our case, one can easily construct explicit cutoff functions by Hironaka's desingularization and $\log\log$-potentials (cf. \cite[Lemma 2.2]{Berndtsson2012} and \cite[Section 9]{Henri2013}):

\begin{lem}\label{lem:good cutoff}
Suppose that $(X,\om)$ is a compact hermitian variety. 
There exists cutoff functions $(\chi_\vep)_{\vep > 0} \subset \CC^\infty_c(X^\reg)$ satisfying the following properties:
\begin{enumerate}
    \item $\chi_\vep$ is increasing to the characteristic function of $X^\reg$ when $\vep$ decreases to zero;
    \item $\int_X \abs{\ddc\chi_\vep \w \om^{n-1}} \ra 0$ when $\vep \ra 0$; 
    \item $\int_X \dd\chi_\vep \w \dc\chi_\vep \w \om^{n-1} \ra 0$ when $\vep \ra 0$.
\end{enumerate}
\end{lem}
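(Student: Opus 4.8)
The plan is to construct the cutoff functions explicitly via Hironaka's resolution of singularities and $\log\log$-type potentials, following the strategy indicated by the cited references. First I would take a log-resolution $p \colon \wX \to X$ such that $E := p^{-1}(X^\sing)$ is a simple normal crossings divisor in the smooth compact manifold $\wX$, and $p$ restricts to a biholomorphism $\wX \setminus E \xrightarrow{\sim} X^\reg$. Fix a hermitian metric $\wom$ on $\wX$ and, for each irreducible component $E_i$ of $E$, a smooth hermitian metric $h_i$ on the line bundle $\CO_{\wX}(E_i)$ together with a section $s_i$ cutting out $E_i$; after scaling we may assume $|s_i|^2_{h_i} \le e^{-e}$ on $\wX$, so that $\log\log(1/|s_i|^2_{h_i})$ is well-defined and positive near $E$. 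Set $w := \sum_i \log\log(1/|s_i|^2_{h_i})$, an exhaustion of $\wX \setminus E$ that tends to $+\infty$ along $E$. The key analytic point, which goes back to Berndtsson's lemma, is that such $\log\log$-potentials have \emph{finite} Dirichlet energy: $\int_{\wX} \dd w \w \dc w \w \wom^{n-1} < \infty$, because the singularity of $\dd w$ along $E_i$ is of order $\frac{1}{r\log(1/r)}$ in the normal direction, whose square $\frac{1}{r^2(\log(1/r))^2}$ is integrable against the Euclidean area element $r\,dr$.

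Next I would build $\chi_\vep$ from $w$ by composing with a suitable one-variable profile. Choose a smooth function $\eta \colon \BR \to [0,1]$ with $\eta \equiv 1$ on $(-\infty, 1]$, $\eta \equiv 0$ on $[2, \infty)$, and $|\eta'| \le 2$; then set, for a parameter $N = N(\vep) \to \infty$,
\[
    \chi_\vep := \eta\!\lt( \frac{w}{N} \rt),
\]
extended by $1$ on the region where $w$ is not defined (i.e. away from a neighborhood of $E$), and by $0$ in a fixed neighborhood of $E$. This is a genuine element of $\CC^\infty_c(X^\reg)$ after transporting via $p$, it is identically $1$ outside a shrinking neighborhood of $X^\sing$, and it increases to $\1_{X^\reg}$ as $\vep \to 0$, giving property (i). For the gradient estimate (iii), a change of variables and Cauchy--Schwarz give
\[
    \int_{\wX} \dd\chi_\vep \w \dc\chi_\vep \w \wom^{n-1}
    = \frac{1}{N^2} \int_{\{N \le w \le 2N\}} (\eta')^2(w/N)\, \dd w \w \dc w \w \wom^{n-1}
    \le \frac{4}{N^2} \int_{\wX} \dd w \w \dc w \w \wom^{n-1} \to 0,
\]
using the finite-energy bound above; here I pull everything back to $\wX$ using $p^\ast \om$ (which is only semipositive but dominated by $C\wom$ on the relevant region, and that constant is harmless since we only need an upper bound going to zero). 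For property (ii), I write $\ddc \chi_\vep = \frac{1}{N}\eta'(w/N)\ddc w + \frac{1}{N^2}\eta''(w/N)\dd w \w \dc w$ and estimate the two terms against $\om^{n-1}$: the first contributes at most $\frac{C}{N}\int_{\{N\le w\le 2N\}}|\ddc w \w \om^{n-1}|$, and the $\log\log$-potentials are chosen precisely so that $\ddc w$ is integrable against $\om^{n-1}$ (this is the second half of Berndtsson's lemma: $\ddc \log\log(1/|s_i|^2)$ has a coefficient of order $\frac{1}{r^2(\log(1/r))^2}$ which is integrable in dimension $\ge 1$ against $r\,dr\,d\theta$ wedged with the transverse volume), so that term is $O(1/N)$; the second term is controlled by $\frac{C}{N^2}\int_{\wX}\dd w\w\dc w\w\om^{n-1} = O(1/N^2)$. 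Choosing $N(\vep) \to \infty$ as $\vep \to 0$ makes both go to zero.

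The main obstacle is making the singularity analysis of the $\log\log$-potentials rigorous and uniform across the several components $E_i$ of $E$, in particular at the crossings where two or more $E_i$ meet: there one must check that the cross terms $\dd\log\log(1/|s_i|^2) \w \dc\log\log(1/|s_j|^2)$ and the mixed Hessian terms are still integrable, which follows because each factor is locally in $L^2$ (for the gradient) and the Hessian pieces are locally $L^1$, so products of a good factor with a bounded one remain integrable. A clean way to organize this is to work in local coordinates $(z_1,\dots,z_n)$ on $\wX$ in which $E = \{z_1\cdots z_k = 0\}$, reduce to the model potentials $\log\log(1/|z_j|^2)$, and verify the two integrability statements by an explicit computation in polar coordinates in each $z_j$-plane; the SNC hypothesis guarantees this local model is valid everywhere. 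Once the two facts $\int \dd w\w\dc w\w\wom^{n-1}<\infty$ and $\int|\ddc w\w\wom^{n-1}|<\infty$ are in hand, the profile construction above delivers $(\chi_\vep)_{\vep>0}$ with all three properties, completing the proof of Lemma~\ref{lem:good cutoff}.
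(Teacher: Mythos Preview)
The paper does not actually supply a proof of this lemma: it only remarks that such cutoff functions can be constructed via Hironaka's desingularization and $\log\log$-potentials, citing \cite[Lemma~2.2]{Berndtsson2012} and \cite[Section~9]{Henri2013}. Your proposal carries out exactly that construction in detail---log-resolution, potential $w=\sum_i \log\log(1/|s_i|_{h_i}^2)$, profile $\chi_\vep=\eta(w/N)$, and the finite-energy and integrable-Hessian estimates for $w$---so it is correct and matches the approach the paper indicates.
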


These functions allow us to perform integration by parts as on a compact manifold. 
Then we shall argue that the desired results hold when $\vep$ tends to zero.
Now, we give a proof of Theorem~\ref{bigthm:Gauduchon extention}:

\begin{proof}[Proof of Theorem~\ref{bigthm:Gauduchon extention}]
We divide the proof in two parts.

\smallskip

\noindent {\bf Part 1. Uniqueness of singular Gauduchon metrics:}
Assume that $\om_\RG$ and $\om_\RG'$ are both bounded Gauduchon metrics in the same conformal class.
We write $\rho$ to be the bounded Gauduchon factor satisfying $\rho^{\frac{1}{n-1}} \om_\RG = \om_\RG'$. 
Let $(\chi_\vep)_{\vep>0}$ be cutoff functions given in Lemma~\ref{lem:good cutoff}.
From Stokes formula and direct computations, we derive
\begin{equation}\label{eq:grad estimate}
    \int_{X^\reg} \chi_\vep |\dd \rho|^2_{\om_\RG} \om_\RG^n
    = \oh \int_{X^\reg} \rho^2 \ddc\chi_\vep \w \om_\RG^{n-1}.
\end{equation}
Indeed, 
{\small
\begin{align*}
    \int_{X^\reg} \chi_\vep |\dd \rho|_{\om_\RG}^2 \om_\RG^{n-1}
    &= \int_{X^\reg} \chi_\vep \dd \rho \w \dc \rho \w \om_\RG^{n-1}
    = - \int_{X^\reg} \rho \dd \lt( \chi_\vep \dc \rho \w \om_\RG^{n-1} \rt)\\
    &= - \int_{X^\reg} \rho \lt( \dd \chi_\vep \w \dc \rho \w \om_\RG^{n-1} + \chi_\vep \ddc \rho \w \om_\RG^{n-1} - \chi_\vep \dc \rho \w \dd \om_\RG^{n-1}\rt)\\
    &= -\oh \int_{X^\reg} \dd \rho^2 \w \dc \chi_\vep \w \om_\RG^{n-1} + \int_{X^\reg} \rho \chi_\vep \dd \rho \w \dc \om_\RG^{n-1}\\
    &= \oh \int_{X^\reg} \rho^2 \dd \lt( \dc \chi_\vep \w \om_\RG^{n-1}\rt)
    - \oh \int_{X^\reg} \rho^2 \dd\lt(\chi_\vep \dc\om_\RG^{n-1}\rt)\\
    &= \oh \int_{X^\reg} \rho^2 \ddc \chi_\vep \w \om_\RG^{n-1}
    - \oh \int_{X^\reg} \rho^2 \dc \chi_\vep \w \dd \om_\RG^{n-1}
    - \oh \int_{X^\reg} \rho^2 \dd \chi_\vep \w \dc \om_\RG^{n-1}\\
    &= \oh \int_{X^\reg} \rho^2 \ddc \chi_\vep \w \om_\RG^{n-1}.
\end{align*}
}
By Lemma~\ref{lem:good cutoff} and the boundedness of $\om_\RG$, we can see that $\int_{X} |\ddc \chi_\vep \w \om_G^{n-1}|$ converges to zero when $\vep$ tends to zero. 
Since $\rho$ is bounded, the RHS of (\ref{eq:grad estimate}) goes to zero and hence 
\[
    \int_{X^\reg} |\dd \rho|^2_{\om_\RG} \om_\RG^n = 0.
\]
Because $X^\reg$ is connected by the irreducible assumption, $\rho$ is a constant.

\smallskip

\noindent {\bf Part 2. Extension to a pluriclosed current:} 
We already have a smooth pluriclosed bounded positive $(n-1,n-1)$-form $\om_\RG^{n-1}$ on $X^\reg$. 
Note that $\om_\RG^{n-1}$ can extend trivially as a bounded positive $(n-1,n-1)$-current $T$ on $X$.
If $\om_G^{n-1}$ is closed, so would be $T$ by Skoda--El Mir extension result \cite{Skoda1982, ElMir1984}.
Some results for plurisubharmonic currents do exist in the literature (see eg. \cite{AB1993, DEE2003}), but we could not find the one of interest for us here. 
Therefore, we provide a quick proof.

Again, we are going to use good cutoff functions $(\chi_\vep)_{\vep > 0}$ in Lemma~\ref{lem:good cutoff} to prove that $T$ is pluriclosed in the sense of currents.
Fix a smooth function $f$ on $X$.
We need to show that 
\[
    \iprod{f}{\ddc T} := \int_{X} \ddc f \w T = 0.
\]
Using the cutoff functions constructed in Lemma~\ref{lem:good cutoff}, we can write
\[
    \int_{X} \ddc f \w T 
    = \underbrace{\int_{R_\vep} \chi_\vep \ddc f \w T}_{:= \RN{1}_\vep} 
    + \underbrace{\int_{S_\vep} (1-\chi_\vep) \ddc f \w T}_{:=\RN{2}_\vep}
\]
where $R_\vep$ is a small open neighborhood of the closure of $\{\chi_\vep > 0\}$ contained in $X^\reg$ and similarly $S_\vep$ is a small neighborhood of the closure of $\{\chi_\vep < 1\}$.
According to Lemma~\ref{lem:good cutoff}, as $\vep \ra 0$, $R_\vep$ tends to $X^\reg$ and $S_\vep$ shrinks to $X^\sing$.
Therefore, $\RN{2}_\vep$ converges to zero when $\vep$ goes to zero.
We compute the term $\RN{1}_\vep$ by Stokes formula:
\begin{align*}
	\RN{1}_\vep
	&= \int_{R_\vep} f \ddc\lt( \chi_\vep \om_\RG^{n-1}\rt)
	= \int_{R_\vep} f \lt(\ddc \chi_\vep \w \om_\RG^{n-1} + 2 \dd \om_\RG^{n-1} \w \dc \chi_\vep\rt)\\
	&= -\underbrace{\int_{R_\vep} f \ddc \chi_\vep \w \om_\RG^{n-1}}_{:= \RN{3}_\vep} 
	- 2 \underbrace{\int_{R_\vep} \dd f \w \dc \chi_\vep \w \om_\RG^{n-1}}_{:= \RN{4}_\vep}.
\end{align*}
By Cauchy--Schwarz inequality, the term $\RN{4}_\vep$ can be bounded by 
\[
    |\RN{4}_\vep| 
    \leq 
    \underbrace{\int_{R_\vep} \1_{\{\dd \chi_\vep \neq 0\}} \dd f \w \dc f \w \om_\RG^{n-1}}_{:= \RN{5}_\vep}
    + \underbrace{\int_{R_\vep} \dd\chi_\vep \w \dc\chi_\vep \w \om_\RG^{n-1}}_{:= \RN{6}_\vep}.
\]
Using the dominated convergence theorem, $\RN{5}_\vep$ converges to zero when $\vep$ goes to zero, because the set $\{\dd\chi_\vep \neq 0\}$ is contained in $S_\vep$ and $S_\vep$ shrinks to $X^\sing$. 
Applying Lemma~\ref{lem:good cutoff}, $\RN{3}_\vep$ and $\RN{6}_\vep$ converge to zero as $\vep$ tending to zero.
These yield that $\RN{1}_\vep \ra 0$ as $\vep \ra 0$.
All in all, we have $\int_{X} \ddc f \w T = 0$ for all $f \in \CC^\infty(X)$; hence $\ddc T = 0$ in the sense of currents.
\end{proof}

\bibliographystyle{smfalpha}
\bibliography{biblio_SGM}

\end{document}